\DeclareSymbolFont{cyrletters}{OT2}{wncyr}{m}{n}
\DeclareMathSymbol{\Sha}{\mathalpha}{cyrletters}{"58}
\newcommand{\GalTom}{\operatorname{Gal}}
\newcommand{\Hom}{\operatorname{Hom}}
\newcommand{\im}{\operatorname{im}}
\newcommand{\res}{\operatorname{res}}
\newcommand{\CT}{{\operatorname{CT}}}
\newcommand{\Qbar}{{\overline{\mathbb Q}}}
\newcommand{\Z}{\mathbb{Z}}
\newcommand{\Q}{\mathbb{Q}}
\newcommand{\PP}{\mathbb{P}}
\newcommand{\LL}{\mathcal{L}}
\newcommand{\R}{\mathbb{R}}
\newcommand{\FF}{\mathbb{F}}
\newcommand{\C}{\mathcal{C}}
\newcommand{\A}{\mathbb{A}}
\newcommand{\Sel}{\mathrm{Sel}}
\newcommand{\Pic}{\mathrm{Pic}}
\newcommand{\Gal}{\mathrm{Gal}}
\newcommand{\Sym}{\mathrm{Sym}}
\newcommand{\OO}{\mathcal{O}}
\newcommand{\Ahat}{{\widehat{A}}}
\newcommand{\Bhat}{{\widehat{B}}}
\newcommand{\psihat}{{\widehat\psi}}
\newcommand{\phihat}{{\widehat\phi}}
\def \qmp {{\Q}^*/\Q^{*p}}
\theoremstyle{plain}
\newtheorem{corollary}{Corollary}
\newtheorem{theorem}[corollary]{Theorem}
\newtheorem{lemma}[corollary]{Lemma}
\newtheorem{proposition}[corollary]{Proposition}
\theoremstyle{definition}
\newtheorem{remark}[corollary]{Remark}
\newtheorem{example}[corollary]{Example}
\numberwithin{corollary}{section}
\numberwithin{equation}{section}
\begin{document}

\title{Arbitrarily large $p$-torsion in Tate-Shafarevich groups}

\author{by E.\ Victor  Flynn}
\address{Mathematical Institute, University of Oxford,
Andrew Wiles Building, Radcliffe Observatory Quarter,
Woodstock Road, Oxford OX2 6GG, United Kingdom}
\email{flynn@maths.ox.ac.uk}
\author{Ari Shnidman, with an appendix by Tom Fisher}
\address{Einstein Institute of Mathematics,
Hebrew University of Jerusalem,
Givat Ram. Jerusalem, 9190401, Israel}
\email{ari.shnidman@gmail.com}
\address{University of Cambridge,
          DPMMS, Centre for Mathematical Sciences,
          Wilberforce Road, Cambridge CB3 0WB, UK}
\email{T.A.Fisher@dpmms.cam.ac.uk}

\subjclass{Primary 11G30; Secondary 11G10, 14H40}
\keywords{Tate-Shafarevich group, abelian variety}
\date{25 September, 2023}

\begin{abstract}
 We show that, for any prime~$p$, there exist
 absolutely simple abelian varieties over~$\Q$ with arbitrarily large $p$-torsion in their Tate-Shafarevich 
groups.  To prove this, we construct explicit $\mu_p$-covers of Jacobians 
of curves of the form $y^p = x(x-1)(x-a)$ 
which violate the Hasse principle. In the appendix, Tom Fisher explains how to interpret our proof in terms of a Cassels-Tate pairing. 
\end{abstract}

\maketitle

\normalsize
\baselineskip=14pt
\section{Introduction}
\label{sec:intro}

An algebraic variety $Y$ over $\Q$ {\it violates the Hasse principle} 
if $Y(\Q) = \emptyset$ despite the fact that $Y(\Q_p) \neq \emptyset$ for all completions $\Q_p$ of $\Q$, including the archimedean completion $\Q_{\infty} = \R$.
The Hasse-Minkowski theorem shows that quadrics in $\PP^n$ never violate the Hasse principle, but violations do exist in higher degree. Some early examples include the hyperelliptic curve 
$2y^2 = x^4 - 17$ studied by Lind and 
Reichardt \cite{lind:quartic, reichardt:quartic} and Selmer's plane 
cubic $3x^3 + 4y^3 + 5z^3 = 0$ \cite{selmer:cubic}. Each of these 
is a genus one curve $C$, and is therefore 
 a torsor for its Jacobian, the elliptic curve $E = \mathrm{Pic}^0(C)$. 
The fact that $C$ violates the Hasse principle means that it represents a non-trivial 
element $[C]$ in the Tate-Shafarevich group $\Sha(E)$ parameterizing 
locally trivial $E$-torsors.  The order of $[C]$ in $\Sha(E)$ is, in these cases, equal to the minimum positive 
degree of an effective $0$-cycle, hence~$2$ 
in the first example and~$3$ in the second.

There are by now many 
other examples of non-trivial elements of Tate-Shafarevich groups of elliptic curves. However, it is 
an open question whether 
for every prime $p$ there exists an elliptic curve $E/\Q$ with a class 
of order $p$ in $\Sha(E)$. Geometrically, such $E$-torsors are realized 
as genus one curves $C \subset \mathbb{P}^{p-1}_{\Q}$ contained in 
no hyperplane, which violate the Hasse principle.  
The lack of a systematic construction of order $p$ elements is somewhat surprising, since heuristics 
of Delaunay predict that for a given prime $p$, the probability that 
a random elliptic curve $E$ satisfies $\Sha(E)[p] \neq 0$ should be 
positive \cite{delaunay:heuristics}.  

More generally, for any abelian variety $A/\Q$, the group $\Sha(A)$ parameterizes $A$-torsors which violate the Hasse principle.  Like the 1-dimensional case of elliptic curves, there are few examples with $\Sha(A)[p] \neq 0$ for large primes $p$, beyond examples where $A = \mathrm{Res}^F_\Q B$ is the Weil restriction of an abelian variety $B$ over a number field $F$ with $\Sha(B)[p] \neq 0$ (see e.g. \cite{clarksherif, kloo, kloosch}). However, the second author and Weiss \cite{shnidmanweiss} recently showed that for 
every prime $p$, there exist {\it absolutely simple} abelian varieties 
$A$ over $\Q$ with $\Sha(A)[p] \neq 0$.  They prove such $A$ exist among the quadratic twists of 
quotients of modular Jacobians $J_0(N)$ with prime level 
$N \equiv 1\pmod {p}$, but the proof does not yield explicit examples.  

\subsection{Results}
Our first main result is an explicit construction of $A$-torsors $X$ which violate the Hasse principle.  In our examples, both $A$ and $X$ have very simple equations.  
To state the theorem, recall the $p$-th power character $\left(\frac{q}{\ell}\right)_p$, which satisfies $\left(\frac{q}{\ell}\right)_p = 1$ if and only if $q$ is a $p$-th power in $\Q_\ell^\times$.

\begin{theorem}\label{thm:explicittorsor}
Let $p > 5$ be a prime and let $u,v$ be integers not divisible by $3$. 
Let~$U$ be the set of primes dividing $3puv(u-3v)$. 
Let $t \geq 2$, and let $k = p_1 p_2 \cdots p_t$, where
each~$p_i$ is a prime not in $U$ satisfying: 
\begin{enumerate}
\item $\left(\frac{p_i}{p_j}\right)_p = 1$, 
for all $i \neq j$ in $\{1,\ldots, t\}$,
\item $\left(\frac{p_i}{q}\right)_p = 1$, 
for all $i \in \{1, \ldots, t\}$ and all $q \in U$,
\item $\left(\frac{q}{p_i}\right)_p = 1$, 
for all $i \in \{1, \ldots, t\}$ and all $q \in U\backslash \{3\}$,
\item $\left(\frac{3}{p_i}\right)_p \neq 1$, for all $i \in \{1, \ldots, t\}$. 
\end{enumerate}
Let $g = p-1$ and consider the variety $\tilde A \subset \A_\Q^{2g+1}$ 
defined by the equations 
\[y_i^p = x_i(x_i - 3uk)(x_i - 9vk),\mbox{ for } i = 1,\ldots ,g,
\mbox{ and } z^p = \prod_{i = 1}^g x_i(x_i - 3uk).\]
The symmetric group $S_g$ acts on $\tilde A$ and the quotient $\tilde A/S_g$ is 
birational to a unique $g$-dimensional abelian variety $A$ over $\Q$.  Let 
$I \subset \{1, \ldots, t\}$ be a proper non-empty subset, and let 
$q = \prod_{i \in I} p_i$.  Let $\tilde X \subset \A_\Q^{2g+1}$ be defined by 
the equations $($with $i = 1,\ldots, g)$
\[y_i^p = x_i(x_i - 3uk)(x_i - 9vk) \mbox{ and } qz^p 
= \prod_{i = 1}^g x_i(x_i - 3uk).\]
Then $\tilde X/S_{g}$ is birational to an $A$-torsor $X$ that 
violates the Hasse principle, and the class of $X$ in $\Sha(A)$ has order $p$.   
\end{theorem}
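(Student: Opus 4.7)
The plan is to realize $X$ as an $A$-torsor arising from a $\phi$-descent for the $\mu_p$-isogeny $\phi\colon A \to J$ between $A$ and the Jacobian $J := \mathrm{Pic}^0(C)$ of the curve $C\colon y^p = x(x-3uk)(x-9vk)$. Denote by $P_0, P_1, P_\infty \in C(\Q)$ the unique points above $x = 0,\,3uk,\,\infty$. The function $f := x(x-3uk)$ has divisor $p(P_0 + P_1 - 2P_\infty)$, so Kummer theory produces an étale $\mu_p$-cover of $C$; passing via the Abel--Jacobi map (identifying $C^{(g)}$ birationally with $J$ via base point $gP_\infty$) yields a $\mu_p$-isogeny $\phi\colon A \to J$ with connecting homomorphism
\[\delta\colon J(\Q) \to H^1(\Q,\mu_p) = \Q^*/\Q^{*p},\qquad D = \sum_{i=1}^g(Q_i - P_\infty) \;\longmapsto\; \prod_{i=1}^g f(Q_i) \bmod \Q^{*p},\]
and local analogues $\delta_\ell$. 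The affine variety $\tilde A/S_g$ is a birational model of $A$, while $\tilde X/S_g$ is the twist of the $\mu_p$-torsor $A \to J$ by $q := \prod_{i\in I}p_i$, so $[X] \in H^1(\Q, A)$ is the image of $q$ under the connecting map from $0 \to \mu_p \to A \to J \to 0$. Showing $X$ violates the Hasse principle with order $p$ in $\Sha(A)$ therefore amounts to verifying (i) $q \in \delta_\ell(J(\Q_\ell))$ at every place $\ell$, and (ii) $q \notin \delta(J(\Q))$.

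\textbf{Local solvability (step (i)).} I split the places $\ell$ into four groups. At $\ell = \infty$ and at primes $\ell \notin U \cup \{p_1,\ldots,p_t\}$, $C$ has good reduction and $q$ is an $\ell$-unit; when $p\nmid\ell-1$ the group $\Z_\ell^*/\Z_\ell^{*p}$ is trivial, and otherwise Hensel-lifting a smooth $\F_\ell$-point of $C$ realizes the required class. For $\ell \in U$, condition~(2) makes each $p_i$ a $p$-th power in $\Q_\ell^*$, so $q \in \Q_\ell^{*p}$ and any $J(\Q_\ell)$-point works. The essential case is $\ell = p_i$: I construct a divisor $D \in J(\Q_{p_i})$ whose first $x$-coordinate has $v_{p_i} = 1$ if $i \in I$ (resp.~$= 0$ if $i \notin I$), with the remaining $x$-coordinates chosen as $p_i$-units, and conditions~(1), (3), (4) ensure the unit parts of $\prod f(Q_j)$ match $q$ modulo $\Q_{p_i}^{*p}$.

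\textbf{Global obstruction (step (ii)).} This is the crux. For any $D \in J(\Q)$, represented by a Galois-stable effective divisor $\sum Q_i$, the element $\delta(D) = \prod f(Q_i) \in \Q^*/\Q^{*p}$ is supported (modulo $p$-th powers) on primes dividing $3uk = 3u\prod p_j$. A valuation analysis at each $p_i$ pins $v_{p_i}(\delta(D)) \bmod p$ in terms of intersection multiplicities of $D$ with $\{x=0\}$ and $\{x=3uk\}$, and a parallel analysis controls the unit part at primes of $U$. Matching $\delta(D) = q = \prod_{i\in I}p_i$ forces equalities of $p$-th power residue symbols that, combined with the global product formula for the $p$-th power Hilbert symbol (e.g.~$\prod_\ell(\delta(D),\,3)_{\ell,p} = 1$) and condition~(4) $\left(\tfrac{3}{p_i}\right)_p = -1$, yield a contradiction whenever $I$ is proper and non-empty.

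\textbf{Conclusion and main obstacle.} Once (i) and (ii) are established, $[X] \in \Sha(A)$ is non-trivial. Since $[X]$ is the image of a $\mu_p$-class, it is annihilated by the next arrow $\phi_*\colon H^1(\Q,A) \to H^1(\Q,J)$ and hence by $\deg\phi = p$ (because $\phi^\vee_* \circ \phi_*$ is multiplication by $p$), so its order in $\Sha(A)$ is exactly $p$. The principal obstacle is step~(ii): one needs a sufficiently clean description of $\delta(J(\Q)) \subseteq \Q^*/\Q^{*p}$ and a reciprocity calculation precisely tuned to the power-residue conditions (1)--(4) on the $p_i$, so that the empty choice $I=\emptyset$ and the full choice $I=\{1,\ldots,t\}$ (where $q$ globally trivializes or coincides with $k$ mod $p$-th powers) are separated from the proper non-empty $I$ in which $X$ genuinely obstructs the Hasse principle.
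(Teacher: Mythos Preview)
Your framework is correct and matches the paper's: the torsor $X$ is the twist of the $\mu_p$-cover $A\to J$ by $q\in\Q^*/\Q^{*p}$, and the two tasks are exactly (i) $q\in\delta_\ell(J(\Q_\ell))$ for all $\ell$ and (ii) $q\notin\delta(J(\Q))$. Your treatment of (i) is essentially right, though at $\ell=p_i$ the paper is more concrete than you are: it evaluates $\delta$ on the global torsion divisors $[(0,0)-\infty]$ and $[(3uk,0)-\infty]$ and observes that a specific monomial in these values equals $p_i$ in $\Q_{p_i}^*/\Q_{p_i}^{*p}$.

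The real gap is in step (ii). Your proposed mechanism, the product formula $\prod_\ell(\delta(D),3)_{\ell,p}=1$, cannot give an obstruction as stated: if $\delta(D)=q$ then this reads $\prod_\ell(q,3)_{\ell,p}=1$, which is Hilbert reciprocity and holds automatically for \emph{every} $q\in\Q^*$. More generally, any identity you write purely in terms of the single class $\delta(D)\in\Q^*/\Q^{*p}$ cannot distinguish $q$ from elements that genuinely lie in the image. What you are missing is the paper's central idea: use the \emph{second} rational $p$-torsion point. The curve has two independent classes $D_0=[(0,0)-\infty]$ and $D_1=[(3uk,0)-\infty]$, and the paper works with the finer two-variable map
\[
\partial^H\colon J(\Q)/\phi(A_H(\Q))\hookrightarrow \Q^*/\Q^{*p}\times\Q^*/\Q^{*p},\qquad H=\{D_0,D_1\},
\]
whose product of coordinates is your $\delta$. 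A short Tamagawa-number argument shows $\#\bigl(J(\Q_{p_i})/\phi(A_H(\Q_{p_i}))\bigr)=p^2$, and the images of $D_0,D_1$ under $\partial^H$ are computed to be $[3^{-3}p_i^{-2},\,3p_i]$ and $[3p_i,\,3^{-2}p_i^{-2}]$ in $(\Q_{p_i}^*/\Q_{p_i}^{*p})^2$; being independent, they \emph{generate} the local image. This complete local knowledge is what makes the argument go: for any global $[r_1,r_2]\in\partial^H(J(\Q))$, one checks that if $p_i$ divides $r_1$ and $1/r_2$ to the same power for \emph{one} $i$, then the exponent of $3$ is pinned down, and condition (4) (that $3$ is a non-$p$-th-power at every $p_j$) propagates this to \emph{all} $j$, forcing $r_1r_2$ to contain either none or all of the $p_j$. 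Hence $r_1r_2$ can never equal $\prod_{i\in I}p_i$ for a proper non-empty $I$. This ``gluing via the prime $3$'' is the step your sketch does not supply, and it genuinely requires the two-dimensional descent; the paper even remarks that a Cassels--Tate/reciprocity interpretation is only conjectural.
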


\begin{remark}
Both $A$ and $X$ are $\mu_p$-covers of the Jacobian $J$ of the genus $p-1$ superelliptic curve 
$C \colon y^p = x(x-3uk)(x-9vk)$. Since $J$ is birational to 
the symmetric power $C^g/S_g$, the $\mu_p$-covers can be seen 
from the equations above as well.
\end{remark}

Using the Cebotarev density theorem, we show in Proposition \ref{prop:arbmany} that there exist primes  $p_1,\ldots, p_t$ satisfying 
the hypotheses of Theorem \ref{thm:explicittorsor}.  
Here is an example with $p = 29$.  

\begin{example}
Let $\tilde X \subset \A_\Q^{28} \times \A_\Q^{28} \times \A_\Q^1$ 
be the variety defined by the $28$ equations
\[y_i^{29} = x_i(x_i - 3\cdot 386029093\cdot545622299)
(x_i +9\cdot386029093\cdot545622299)\]
for $i = 1, \ldots, 28$, as well as the additional equation 
\[386029093  z^{29} 
= \prod_{i = 1}^{28} x_i(x_i - 3\cdot 386029093\cdot545622299).\]
Then $\tilde X/S_{g}$ is birational to a torsor $X$ for a $28$-dimensional abelian 
variety $A$ over $\Q$. Moreover, $X$ violates the Hasse principle and represents an order $29$ element of $\Sha(A)$.
\end{example}

\begin{remark}
As a point of comparison, work of Radi\v cevi\'c \cite{radicevic:explicit} 
gives a method to compute equations for order $p$ torsors 
in the Tate-Shafarevich group of an elliptic 
curve~$E$ over~$\Q$. 
Even for $p = 11$, the equations for these torsors are not so 
easy for humans to write down. As $p$ grows, the computations 
quickly become intractable even for computers. 
\end{remark}

Since the hypotheses of Theorem \ref{thm:explicittorsor} are always met, this gives a second proof of \cite[Thm.\ 1]{shnidmanweiss}, and moreover gives explicit examples for any prime $p$.  Moreover, the flexibility of the index set $I$ allows us to prove our second main result, that $\Sha(A)[p]$ can be arbitrarily large.

\begin{theorem}\label{thm:mainthm} 
For every prime~$p$ and every integer $k \geq 1$, there exists an 
absolutely simple abelian variety $A$ over~$\Q$ with $\#\Sha(A)[p] \geq p^k$.
\end{theorem}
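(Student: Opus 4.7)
\medskip

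\noindent The plan is to apply Theorem~\ref{thm:explicittorsor} with a large collection of auxiliary primes and upgrade the construction from single subsets $I$ to a full $\FF_p$-linear family of $A$-torsors, producing many independent classes in $\Sha(A)[p]$. For $p \leq 3$, one can appeal to the existing literature (Cassels for $p = 2$, and similar constructions for $p = 3$), so assume $p > 3$ throughout.

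Fix $u, v$ (say $u = v = 1$) and a target integer $t$. By inductively applying Chebotarev's density theorem in the appropriate abelian extensions of $\Q(\zeta_p)$ unramified outside $U \cup \{p_1, \dots, p_{j-1}\}$, one produces primes $p_1, \dots, p_t$ satisfying conditions (1)--(4) of Theorem~\ref{thm:explicittorsor}, with $t$ as large as desired. Setting $k = p_1 \cdots p_t$ determines a single abelian variety $A$ independent of the subset $I$. For each $\mathbf e = (e_1, \dots, e_t) \in (\Z/p)^t$, replace the last equation of Theorem~\ref{thm:explicittorsor} with $q_{\mathbf e} z^p = \prod_{i=1}^g x_i (x_i - 3uk)$ where $q_{\mathbf e} = \prod p_i^{e_i}$, and let $X_{\mathbf e}$ be the resulting $A$-torsor (obtained by passing to the $S_g$-quotient). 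Since $X_{\mathbf e} \to A$ is a $\mu_p$-cover, the assignment $\psi \colon \mathbf e \mapsto [X_{\mathbf e}]$ factors as
\[
(\Z/p)^t \hookrightarrow \Q^\times/\Q^{\times p} = H^1(\Q, \mu_p) \longrightarrow H^1(\Q, A[p]) \longrightarrow H^1(\Q, A)[p],
\]
where the last map is induced by the inclusion $\mu_p \hookrightarrow A[p]$ arising from the isogeny $A \to J$. Hence $\psi$ is a group homomorphism, and multiplicativity of the $p$-th power residue symbol shows that the local-triviality verifications in Theorem~\ref{thm:explicittorsor} extend from the single products $q = \prod_{i \in I} p_i$ to arbitrary $q_{\mathbf e}$, giving $\psi\bigl((\Z/p)^t\bigr) \subseteq \Sha(A)[p]$.

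The main step is then to bound $\dim_{\FF_p} \ker \psi$ by a constant $c$ independent of $t$, which together with the choice $t \geq k + c$ will give $\#\Sha(A)[p] \geq p^{t-c} \geq p^k$. Since $\psi$ factors through the $p$-Selmer group $\Sel_p(A)$, and the kernel of $\Sel_p(A) \to \Sha(A)[p]$ is the Kummer image of $A(\Q)/pA(\Q)$, the kernel of $\psi$ embeds into the intersection of $\langle p_1, \dots, p_t \rangle \subset \Q^\times/\Q^{\times p}$ with this Kummer image, which has $\FF_p$-dimension at most $\mathrm{rank}\, A(\Q) + \dim_{\FF_p} A(\Q)[p]$. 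Controlling these quantities via a $p$-descent exploiting conditions (1)--(4) should yield the required uniform bound. Absolute simplicity of $A$ follows from the identification $\mathrm{End}(A_{\bar\Q}) \otimes \Q \cong \Q(\zeta_p)$, since $A$ is the $\mu_p$-isotypic component of the Jacobian of $C\colon y^p = x(x-3uk)(x-9vk)$, and this cyclotomic field has no nontrivial idempotents. The principal obstacle is the uniform bound on $\mathrm{rank}\, A(\Q)$: a priori this rank could grow with $t$, and the argument must exploit condition~(4) (that $3$ is not a $p$-th power modulo each $p_i$) to ensure that new rational points of $A$ do not produce unexpected relations among the classes $[X_{\mathbf e}]$.
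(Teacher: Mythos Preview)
Your proposal has two genuine gaps.

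First, the plan to bound $\ker\psi$ via $\mathrm{rank}\,A(\Q)$ is both unnecessary and unlikely to succeed: as $t$ grows there is no mechanism keeping the rank bounded, and conditions (1)--(4) do not by themselves constrain a full $p$-descent. The paper avoids the rank entirely. The point is that the global-insolubility argument behind Theorem~\ref{thm:explicittorsor} already applies to $q_{\mathbf e} = \prod_i p_i^{e_i}$ with \emph{arbitrary} exponents $1 \le e_i \le p-1$ on any proper non-empty support (this is Proposition~\ref{prop:diff}); its proof uses the refined map $\partial^H \colon J_k(\Q) \to (\Q^\times/\Q^{\times p})^2$ attached to the \emph{pair} of torsion divisors $D_0, D_1$, together with condition~(4), to rule out rational points directly. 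Once every such $q_{\mathbf e}$ is known to lie in $\Sel(B_k)$ but not in $\partial^D(J_k(\Q))$, a two-line linear-algebra argument gives $\dim_{\FF_p}\bigl(\langle p_1,\dots,p_t\rangle \cap \partial^D(J_k(\Q))\bigr) \le 1$: any nonzero element of this intersection must be supported on all of $\{p_1,\dots,p_t\}$, and from two such one manufactures a nontrivial element missing $p_1$, a contradiction. Hence $\#\Sha(A)[p] \ge p^{t-1}$ with no rank information whatsoever (Corollary~\ref{cor:ppart}). Your sketch never invokes the second torsion point or $\partial^H$, which is precisely the input that replaces the rank bound.

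Second, your simplicity argument is incomplete. The $\mu_p$-action on the curve gives only an embedding $\Q(\zeta_p) \hookrightarrow \mathrm{End}(A_{\bar\Q})\otimes\Q$, not an equality; the endomorphism algebra could be strictly larger (for instance a matrix algebra over a subfield), in which case $A$ decomposes. Fixing $u=v=1$ provides no control here. The paper instead varies $u,v$: the generic fibre of $y^p = x(x-1)(x-t)$ over $\Q(t)$ has geometrically simple Jacobian (verified at the specialization $t = \tfrac12 + \tfrac{\sqrt 3}{2}i$, where the curve becomes $y^p = x^3-1$), and Masser's specialization theorem then yields a density-one set of rational $t$ with the same geometric endomorphism ring, among which one chooses $t = 3v/u$ with $3 \nmid uv$ (Lemma~\ref{lem:existsimple}).
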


The cases $p = 2,3,5$ not covered by Theorem \ref{thm:explicittorsor} were proven by B\"olling \cite{bolling}, Cassels \cite{cassels}, and Fisher \cite{fisher}, respectively. Indeed, it was previously known that the
$p$-part of the Tate-Shafarevich group of absolutely simple
abelian varieties over~$\Q$ can be arbitrarily large only for certain 
small primes $p$.  
%
Our examples are special since they arise as $\mu_p$-covers 
of a specific type of Jacobian, so we leave open the question 
of existence of order $p$ elements in $\Sha(A)[p]$ for ``generic'' abelian varieties 
over $\Q$, i.e.\ those such that the Mumford-Tate group is $\mathrm{GSp}_{2g}$ and $A[p]$ is irreducible as a $\Gal(\bar \Q/\Q)$-module.  In both this 
paper and \cite{shnidmanweiss}, the abelian varieties 
are such that $\mathrm{rk}\, \mathrm{End}(A_{\bar{\Q}}) = \dim A$ and $A[p]$ is reducible. 

Since we can control the dimension of our examples, we also conclude:
\begin{corollary}\label{cor:g=p-1}
Suppose $g = p-1$ for some prime $p \geq 7$. Then the Tate-Shafarevich groups of absolutely simple abelian varieties $A$ over $\Q$ of dimension $g$ can be arbitrarily large. More precisely, the groups $\Sha(A)[p]$ can be arbitrarily large. 
\end{corollary}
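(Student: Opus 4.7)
The plan is to deduce the corollary directly from Theorem~\ref{thm:mainthm} by tracking the dimension of the abelian varieties constructed. For the given prime $p \geq 5$, Theorem~\ref{thm:mainthm} supplies, for each $k \geq 1$, an absolutely simple abelian variety $A/\Q$ with $\#\Sha(A)[p]\geq p^k$. It only remains to check that the $A$ produced has dimension exactly $g = p-1$; once that is in hand, $\#\Sha(A)[g+1] = \#\Sha(A)[p] \geq (g+1)^k$ is unbounded as $k$ varies, which is exactly the statement of the corollary.

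The dimension bookkeeping is immediate from the explicit construction underlying Theorem~\ref{thm:mainthm}. In Theorem~\ref{thm:explicittorsor} the abelian variety $A$ is defined to be the unique one birational to $\tilde A/S_g$; equivalently, by the remark following that theorem, $A$ is a $\mu_p$-cover of the Jacobian $J$ of the genus $p-1$ superelliptic curve $C\colon y^p = x(x-3uk)(x-9vk)$. Since an isogeny preserves dimension, $\dim A = \dim J = p-1 = g$, which is the required dimension. Thus, if the proof of Theorem~\ref{thm:mainthm} is built on the construction of Theorem~\ref{thm:explicittorsor} (as the placement of the two results strongly suggests), the resulting $A$ automatically sits in dimension $g$, and the corollary follows.

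The only step that is at all delicate is ensuring that Theorem~\ref{thm:mainthm} does in fact produce such $A$ via this construction, rather than via some auxiliary trick that might enlarge the dimension. If one wanted to be fully self-contained, the safer route is to re-prove the $g = p-1$ case of Theorem~\ref{thm:mainthm} directly: fix $u,v$ as in Theorem~\ref{thm:explicittorsor}, pick $t$ auxiliary primes $p_1,\ldots,p_t$ satisfying conditions (1)--(4) with $t$ large (so that $2^t - 2$ well exceeds $k$), and use the resulting family of torsors $\{X_I\}$ indexed by proper non-empty subsets $I \subset \{1,\ldots,t\}$ to produce many classes in $\Sha(A)[p]$. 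The main analytic obstacle in that route is showing that these classes span an $\FF_p$-subspace of rank at least $k$, which is the genuinely substantive input coming from Theorem~\ref{thm:mainthm}; the dimension statement needed for the corollary is then a free consequence.
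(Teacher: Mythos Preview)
Your proposal is correct and matches the paper's approach: the corollary is not given a separate proof in the paper, only the remark ``Since we can control the dimension of our examples, we also deduce the following result.'' Your argument makes this explicit by observing that the abelian variety produced in the proof of Theorem~\ref{thm:mainthm} (called $B_{u,v,k}$ in Section~\ref{sec:arblarge}, and $A$ in Theorem~\ref{thm:explicittorsor}) is isogenous to the Jacobian of a genus $p-1$ curve, hence has dimension $g=p-1$. Your hedge in the third paragraph is unnecessary---the proof of Theorem~\ref{thm:mainthm} is indeed assembled from Corollary~\ref{cor:ppart}, Proposition~\ref{prop:arbmany}, and Lemma~\ref{lem:existsimple}, all of which stay within this family.
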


Our construction generalizes in an obvious way to any global field. We work over $\Q$ because it is the most interesting case and to keep the notation simple. The restriction $p \neq 5$ in our results is related to some quirky numerology (see Proposition A$(iii)$ in the Appendix) that could probably be removed by tweaking the construction slightly.

\subsection{Previous work}
Previous work on elliptic curves
(\cite{bkls:Sha, bolling, cassels, fisher, kloo, kloosch, 
kramer, lemmermeyer, lemmermollin, matsuno}),
has found arbitrarily large $p$-torsion part of the Tate-Shafarevich group
for $p \leqslant 7$ and $p=13$. In higher dimension, Creutz~\cite{creutz} has
shown that for any principally polarized abelian variety~$A$ over 
a number field~$K$, the $p$-torsion in the Tate-Shafarevich group
can be arbitrarily large over a field extension~$L$ of degree which
is bounded in terms of~$p$ and the dimension of~$A$, generalising
work of Clark and Sharif~\cite{clarksherif}.
In higher dimension over~$\Q$, the first author~\cite{flynn} has recently 
shown that the 2-torsion subgroup of Tate-Shafarevich groups of absolutely
simple Jacobians of genus~$2$ curves over~$\Q$ can be arbitrarily large, 
and then in~\cite{flynnvarieties}
that the $2$-torsion of the Tate-Shafarevich groups of absolutely
simple Jacobians of curves of any genus over~$\Q$ can be
arbitrarily large. With Bruin, the authors recently 
showed in~\cite{bfs:sqrtthree} that $\Sha(A)[3]$ can be arbitrarily 
large among certain abelian surfaces $A/\Q$.  
Many of these works make use of Jacobians
with an isogeny to another Jacobian, comparing the bound
obtained using isogeny-descent against that of a complete
$p$-descent.
\par 

\subsection{Approach}
Our method makes use of Jacobians with two independent $\Q$-rational 
$p$-torsion points, so 
we also make (implicit) use of isogenies.  However, instead of bounding the Mordell-Weil 
rank, we construct  locally soluble torsors and show directly that they have 
no rational points.   Since our method does not require knowledge 
of $L$-functions nor any information related 
to the rank of $A(\Q)$, it is more widely applicable.  Our technique 
is similar in spirit to that of Cassels in \cite{cassels} who used the Cassels-Tate pairing to show 
that the $3$-part of the Tate-Shafarevich group of elliptic curves 
can be arbitrarily large.  
However, our approach is more direct. In the appendix by Tom Fisher,
an alternative interpretation of our proof is given 
in terms of an appropriate Cassels-Tate pairing.  

We construct our torsors purely geometrically, as $\mu_p$-covers. 
In fact, we avoid the use of Galois cohomology in this paper, as a way of emphasizing the geometry. 
Experts will see that the proof can be interpreted cohomologically using standard descent 
techniques \cite{schaefer:sel, prolegom}, but the geometric point of view is 
the most direct way to understand the construction and will perhaps be 
more accessible to those less familiar with Selmer 
groups (though we do assume familiarity with the basics of abelian varieties). 

\subsection{Outline of proof}
In Section \ref{sec:mucovers} and \ref{sec:mudescent}, we prove some preliminary 
material on $\mu_p$-covers and $\mu_p$-descent. Most of this will be well-known to experts, but we have customized the discussion to our needs and made it fairly self-contained. 
In Section \ref{sec:jacobians}, we specialize the discussion to $\mu_p$-covers of Jacobians of superelliptic curves.  In Section \ref{sec:arblarge}, we prove Theorem \ref{thm:explicittorsor}.  
We must show that the torsors have local points everywhere and yet 
have no rational points. For most primes $\ell$, it is easy to see 
that the torsors have $\Q_\ell$-points using the fact that almost 
all of the primes in the set $\{p_1,\cdots, p_t\} \cup U$ are $p$-th 
powers modulo each other.  The subtle case is where $\ell = p_i$, and 
in this case we construct points explicitly using the torsion points  
$D_0 = (0,0) - \infty$ and $D_1= (3uk,0) - \infty$ on $J$.  The more 
interesting argument is the proof that the torsors have no global 
points.  For this, we first show that the two global torsion 
divisors $D_0$ and $D_1$ generate a certain quotient of $J(\Q_{p_i})$, 
for each $p_i$.  The presence of the powers of $3$ in the model of 
the curve, and the fact that $3$ is not a $p$-th power locally, 
then ``glues together'' the localizations of the torsors in a certain way that makes it impossible for them to have a global point unless 
the parameter $q$ is divisible by either all or none of the primes $p_1, \ldots, p_t$.  
The particular choice of the prime $3$ here is not special (we could 
replace it by $5$ or $7$, etc.) but the presence of this 
``gluing prime'' plays the crucial role in the argument.  

In Section~\ref{sec:proofofThm1}, we deduce Theorem~\ref{thm:mainthm} 
from Theorem~\ref{thm:explicittorsor}. First, we use a Cebotarev 
argument to show that given $p$, the set $U$, and any $t \geq 1$, 
there exist primes $p_1, \ldots, p_t$ satisfying the conditions of 
Theorem~\ref{thm:explicittorsor}. Second, the flexibility in the 
choice of $q$ allows us to generate a subgroup of $\FF_p$-rank at 
least $t - 1$ in $\Sha(A)[p]$.  Finally, we use a theorem of Masser 
to show that for $100\%$ of integers $u,v$ not divisible by~$3$, 
the corresponding abelian variety is geometrically simple. In the appendix, Tom Fisher recasts our proof in terms of a Cassels-Tate pairing. 

\subsection{Acknowledgements} The authors thank Michael Stoll for his comments and for 
organizing Rational Points 2022, where they began working together 
on this problem.  The second author was supported by the Israel 
Science Foundation (grant No. 2301/20).  The authors also thank Ariyan Javanpeykar, Jef Laga, and Ariel Weiss for comments on an earlier draft. Finally, we thank Tom Fisher for his suggestions and for  letting us include his appendix. Competing interests: The authors declare none.

\section{$\mu_p$-covers}\label{sec:mucovers}
\subsection{Classifying $\mu_p$-covers}
Let $X$ be a proper variety over a field $F$.  Let $\mu_p$ be the $F$-group scheme of $p$-th roots of unity.  
A $\mu_p$-cover of $Y$ (or more formally,  a $\mu_p$-torsor over $Y$ in the fppf topology) is a $Y$-scheme $X$ together with a $\mu_p$-action that is simply 
transitive on fibers over $Y$. The $\mu_p$-covers of $Y$ form a category $\mathcal{M}_p(Y)$ 
whose morphisms are $\mu_p$-equivariant isomorphisms.  The following 
proposition gives a concrete way to think about $\mu_p$-covers. 
   
\begin{proposition}\label{prop:equiv}
There is an equivalence of categories between $\mathcal{M}_p(Y)$ and 
the category of pairs $(\LL, \eta)$ where $\LL$ is an invertible 
sheaf on $Y$ and $\eta \colon \LL^{\otimes p} \simeq \OO_Y$ is an 
isomorphism. Here, the morphisms $(\LL, \eta) \to (\LL', \eta')$ are isomorphisms 
$g \colon \LL \to \LL'$ such that $\eta' \circ g^{\otimes p} = \eta$.  
\end{proposition}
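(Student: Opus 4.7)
I would construct functors in both directions using Cartier duality and verify they are mutually quasi-inverse, reducing everything to étale-local trivializations. The key input throughout is that $\mu_p$ is diagonalizable, so a $\mu_p$-action on a quasi-coherent $\OO_Y$-module $\F$ is the same datum as a $\Z/p$-grading $\F = \bigoplus_{i \in \Z/p} \F_i$, with the characters of $\mu_p$ indexed by $\Z/p$.

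Given $(\LL, \eta)$, I would form the $\Z/p$-graded $\OO_Y$-algebra
\[\mathcal{A} = \bigoplus_{i=0}^{p-1} \LL^{\otimes i},\]
whose multiplication is the canonical map $\LL^{\otimes i} \otimes \LL^{\otimes j} \to \LL^{\otimes(i+j)}$ when $i+j < p$, and for $i+j \geq p$ is obtained by composing with $\eta$ to reduce the exponent modulo $p$. Set $X := \mathrm{Spec}_Y(\mathcal{A})$; the grading on $\mathcal{A}$ promotes to a $\mu_p$-action on $X$. To check that $X$ is a $\mu_p$-torsor, I would argue étale-locally on $Y$: after passing to an étale cover over which $\LL$ is trivial, $\eta$ becomes multiplication by a unit $u \in \OO_Y^{\times}$, and $\mathcal{A} \cong \OO_Y[t]/(t^p - u)$, which is manifestly a $\mu_p$-torsor (a further étale extraction of $u^{1/p}$ trivializes it).

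Conversely, given a $\mu_p$-cover $\pi \colon X \to Y$, I would use that $\pi$ is affine (because $\mu_p$ is) so that $\pi_* \OO_X$ inherits a $\mu_p$-equivariant $\OO_Y$-algebra structure, which via Cartier duality is a $\Z/p$-graded $\OO_Y$-algebra
\[\pi_* \OO_X = \bigoplus_{i \in \Z/p} \LL_i, \qquad \LL_0 = \OO_Y.\]
Then set $\LL := \LL_1$ and take $\eta$ to be the iterated multiplication $\LL_1^{\otimes p} \to \LL_0 = \OO_Y$. The torsor hypothesis gives étale-local triviality (valid since $\mu_p$ is étale in characteristic not $p$), so étale-locally $\pi_* \OO_X \cong \OO_Y[t]/(t^p - 1) = \bigoplus_i \OO_Y \cdot t^i$. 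This shows each $\LL_i$ is invertible and that the multiplication $\LL_1^{\otimes i} \to \LL_i$ is an isomorphism, whence $\eta$ is an isomorphism.

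Finally, I would verify the functors are quasi-inverse. On objects this is built into the étale-local identifications above. On morphisms, a graded $\OO_Y$-algebra isomorphism between the two resulting graded algebras is determined by its degree-$1$ part $g \colon \LL \to \LL'$, and the degree-$p$ compatibility is exactly the condition $\eta' \circ g^{\otimes p} = \eta$ of the statement; conversely, any such $g$ uniquely extends to a graded algebra isomorphism by taking tensor powers. The only real technical content is the invertibility of the weight spaces $\LL_i$ in the inverse direction, and I expect this to be the main obstacle; it dissolves by combining Cartier duality with étale-local triviality of the torsor.
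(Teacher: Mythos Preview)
Your proposal is correct and follows essentially the same approach as the paper's proof: both construct the functor from $(\LL,\eta)$ by taking the relative spectrum of the $\Z/p$-graded algebra $\bigoplus_{i=0}^{p-1}\LL^{\otimes i}$ with multiplication twisted by $\eta$, and the inverse functor by decomposing $\pi_*\OO_X$ into $\mu_p$-eigensheaves. The paper simply asserts the result is well known and sketches the two functors, whereas you supply additional detail (\'etale-local verification that each $\LL_i$ is invertible, the torsor condition, and the compatibility on morphisms), but there is no genuine difference in strategy.
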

\begin{proof}
This is well-known (see \cite{arsievistoli:stacks} or \cite[pg. 71]{mumford:abelianvarieties}), so we just describe the functors in 
both directions. If $\pi \colon X \to Y$ is a $\mu_p$-cover, then 
there is a $\Z/p\Z$-grading on the $\OO_Y$-module 
\[\pi_*\OO_X = \OO_Y \oplus \bigoplus_{i = 1}^{p-1} \LL_i\]
where each $\LL_i$ is the invertible subsheaf of $\pi_*\OO_X$ on 
which $\mu_p$ acts by $\zeta \cdot s = \zeta^i s$.  The algebra 
structure of $\pi_*\OO_X$ gives isomorphisms 
$\LL_i \otimes \LL_j \simeq \LL_{i + j}$, where indices are to be 
taken modulo $p$ and where $\LL_0 = \OO_Y$.  Thus, we obtain an 
isomorphism $\LL_1^{\otimes p} \simeq \OO_Y$.  Conversely, starting 
with a pair $(\LL,\eta)$, we can define a sheaf of $\OO_Y$-algebras 
$\OO_Y \oplus \bigoplus_{i = 1}^{p-1} \LL^i$ using the given isomorphism 
$\eta$ to define the multiplication 
$\LL^i \otimes \LL^j \simeq \LL^{i+j} \simeq \LL^{i+j - p}$ on the 
factors with $i + j \geq p$.  The relative spectrum of this sheaf 
over $Y$ is then naturally endowed with a $\mu_p$-action making it 
a $\mu_p$-cover.       
\end{proof}

\begin{remark}
If $Y = \mathrm{Spec} \, F$, this recovers Kummer theory.     
\end{remark}

\subsection{$\mu_p$-covers of abelian varieties}
Let us now specialize to the case where $Y$ is an abelian variety over a field $F$ of characteristic not $p$.  
We will think of a $\mu_p$-cover  $\pi \colon X \to Y$ in terms of the corresponding 
 pair $(\LL, \eta)$.  The isomorphism class of  $\LL$ is a well-defined element of $\Pic(Y) = \Pic_Y(F)$, called the 
{\it Steinitz class} of $\pi$.  The existence of $\eta$ means that $\LL$ is $p$-torsion, so that $\LL \in \widehat{Y}[p](F)$, where  
$\widehat{Y}  = \Pic^0_Y \subset \Pic_Y$ is the dual abelian variety parameterizing algebraically trivial line bundles on $Y$.  

From one $\mu_p$-cover $(\LL, \eta)$, we may construct many more, 
simply by scaling $\eta \colon \LL^{\otimes p} \to \OO_Y$ by 
any $r \in F^*$.  Two $\mu_p$-covers $(\LL, r\eta)$ and $(\LL, s\eta)$ 
are isomorphic  if and only if $r/s\in F^{*p}$. More generally, given 
two $\mu_p$-covers $(\LL,\eta)$ and $(\LL', \eta')$, the tensor 
product $(\LL \otimes \LL', \eta \otimes \eta')$ is another. 
Let $H^1(Y, \mu_p)$ denote the set of isomorphism classes of 
$\mu_p$-covers of $Y$. 
\begin{proposition}\label{torsorses}
The set $H^1(Y, \mu_p)$ is naturally an abelian group, and sits in 
a short exact sequence
\[ 0 \to F^*/F^{*p}  \to H^1(Y, \mu_p) \to \widehat{Y}[p](F) \to 0.\]
\end{proposition}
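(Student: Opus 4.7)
My plan is to use the description of $\mu_p$-covers as pairs $(\LL, \eta)$ from Proposition \ref{prop:equiv}, and then unwind what is essentially the Kummer exact sequence in concrete terms.

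\textbf{Step 1 (group structure).} I would define the multiplication on $H^1(Y, \mu_p)$ by tensor product of pairs, $(\LL, \eta) \cdot (\LL', \eta') := (\LL \otimes \LL', \eta \otimes \eta')$, with identity $(\OO_Y, \mathrm{id})$ and inverses $(\LL^{\otimes -1}, \eta^{\otimes -1})$. Checking that this operation descends to isomorphism classes and is associative and commutative is routine given Proposition \ref{prop:equiv}.

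\textbf{Step 2 (Steinitz class and landing in $\widehat{Y}[p](F)$).} I would define $\Psi \colon H^1(Y, \mu_p) \to \widehat Y[p](F)$ by $(\LL, \eta) \mapsto [\LL]$. The existence of $\eta$ shows $\LL^{\otimes p} \simeq \OO_Y$, so $[\LL] \in \Pic(Y)[p]$. Since the Néron--Severi group of an abelian variety is torsion-free, every $p$-torsion line bundle on $Y$ is algebraically trivial, so $[\LL] \in \Pic^0(Y)$. Because $Y$ has the $F$-rational point $0 \in Y(F)$, the Picard scheme $\widehat Y = \Pic^0_{Y/F}$ is rigidified and the natural map $\Pic^0(Y) \to \widehat Y(F)$ is an isomorphism (there is no Brauer obstruction to descent). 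Hence $\Psi$ takes values in $\widehat Y[p](F)$ and is clearly a group homomorphism.

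\textbf{Step 3 (exactness at the middle).} If $(\LL, \eta) \in \ker \Psi$, pick an $F$-isomorphism $\LL \simeq \OO_Y$; transporting $\eta$ along this iso, the cover is represented by $(\OO_Y, m_r)$ where $m_r$ is multiplication by some $r \in F^* = H^0(Y, \OO_Y)^*$. An isomorphism $(\OO_Y, m_r) \to (\OO_Y, m_s)$ is given by multiplication by some $\lambda \in F^*$, and the compatibility condition $m_s \circ (\lambda \cdot)^{\otimes p} = m_r$ reads $r = \lambda^p s$. Thus $\ker \Psi$ is canonically identified with $F^*/F^{*p}$, and one checks that $r \mapsto (\OO_Y, m_r)$ is a group homomorphism, which is injective by the preceding.

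\textbf{Step 4 (surjectivity).} Given $\alpha \in \widehat Y[p](F)$, I use the isomorphism $\widehat Y(F) = \Pic^0(Y)$ from Step 2 to represent $\alpha$ by an honest $F$-rational line bundle $\LL$ on $Y$. The hypothesis $p\alpha = 0$ says $\LL^{\otimes p} \simeq \OO_Y$ as $F$-rational line bundles, so some $F$-rational isomorphism $\eta \colon \LL^{\otimes p} \simeq \OO_Y$ exists (the space of such isomorphisms is an $F^*$-torsor, hence nonempty once we know the isomorphism class is trivial). The pair $(\LL, \eta)$ then maps to $\alpha$.

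The main conceptual subtlety is the identification $\widehat Y(F) = \Pic^0(Y)$ on the right: a priori the target of $\Psi$ could have extra elements obstructed by Brauer classes, and it is precisely the existence of a rational point on the abelian variety $Y$ that kills this obstruction and makes the sequence surject onto $\widehat Y[p](F)$ rather than a possibly smaller subgroup.
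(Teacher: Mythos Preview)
Your proposal is correct and follows the same approach as the paper: the paper's proof is literally the one-line ``This follows from Proposition~\ref{prop:equiv} and the discussion above,'' and your Steps~1--4 are a careful unpacking of precisely that discussion (tensor product for the group law, Steinitz class for the right-hand map, scaling of $\eta$ for the identification of the kernel with $F^*/F^{*p}$). You have added two justifications the paper leaves implicit---that $\mathrm{NS}(Y)$ is torsion-free so the Steinitz class lands in $\Pic^0$, and that the rational point $0\in Y(F)$ kills the Brauer obstruction so $\Pic^0(Y)\simeq \widehat Y(F)$---but these are refinements of, not departures from, the paper's argument.
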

\begin{proof}
This follows from Proposition \ref{prop:equiv} and the discussion above.
\end{proof}

\begin{remark}
We use the notation $H^1(Y, \mu_p)$ since the \'etale cohomology group $H^1_\mathrm{et}(Y, \mu_p)$ is also in bijection with isomorphism classes of $\mu_p$-covers.   From this point of view, one obtains Proposition \ref{torsorses}  by applying the long exact sequence in cohomology to the short sequence of sheaves $0 \to \mu_p \to \mathbb{G}_m \to \mathbb{G}_m \to 0$.   
\end{remark}

\begin{lemma}
The $\mu_p$-cover $\pi \colon X \to Y$ corresponding to $(\LL, \eta)$ is geometrically connected if and only if $\LL\not\simeq \mathcal{O}_X$.  
\end{lemma}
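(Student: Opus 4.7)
The plan is to rephrase geometric connectedness in terms of $\dim_{\bar F} H^0(X_{\bar F}, \OO_{X_{\bar F}})$ and then compute this dimension using the decomposition of $\pi_*\OO_X$ recorded in the proof of Proposition~\ref{prop:equiv}, invoking the classical vanishing of global sections for non-trivial degree zero line bundles on an abelian variety.

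First I would base change the identification $\pi_*\OO_X \cong \OO_Y \oplus \bigoplus_{i=1}^{p-1}\LL^i$ to the algebraic closure and take global sections, obtaining
\[
H^0(X_{\bar F}, \OO_{X_{\bar F}}) \;=\; \bar F \;\oplus\; \bigoplus_{i=1}^{p-1} H^0(Y_{\bar F}, \LL_{\bar F}^{\otimes i}).
\]
Since $\LL \in \widehat Y[p](F)$ is killed by the prime $p$, it has order either $1$ or $p$; and because the restriction $\widehat Y[p](F) \hookrightarrow \widehat Y[p](\bar F)$ preserves orders, $\LL \simeq \OO_Y$ (over $F$) if and only if $\LL_{\bar F} \simeq \OO_{Y_{\bar F}}$. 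Consequently, assuming $\LL \not\simeq \OO_Y$ forces each $\LL_{\bar F}^{\otimes i}$, for $1 \leq i \leq p-1$, to be non-trivial.

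The key input is then the standard fact that on an abelian variety over an algebraically closed field, a non-trivial algebraically trivial line bundle has no non-zero global sections (see \cite{mumford:abelianvarieties}). Each $\LL^{\otimes i}$ lies in $\widehat Y[p] \subset \widehat Y = \Pic^0(Y)$, hence is algebraically trivial, so this applies summand by summand. When $\LL \not\simeq \OO_Y$ every non-constant summand vanishes and $H^0(X_{\bar F}, \OO_{X_{\bar F}}) = \bar F$; when $\LL \simeq \OO_Y$ each summand contributes a copy of $\bar F$ and $H^0(X_{\bar F}, \OO_{X_{\bar F}}) \cong \bar F^p$.

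To pass from $H^0$ to the number of connected components I would observe that $\pi$ is finite \'etale (since $p$ is invertible in $F$ and $\mu_p$ acts freely), so $X_{\bar F}$ is smooth over $Y_{\bar F}$ and in particular reduced; thus its set of connected components is in bijection with $\mathrm{Spec}\, H^0(X_{\bar F}, \OO_{X_{\bar F}})$ and its cardinality equals $\dim_{\bar F} H^0(X_{\bar F}, \OO_{X_{\bar F}})$. This gives both directions of the equivalence. The only substantive ingredient is the vanishing $H^0(Y_{\bar F}, \mathcal M) = 0$ for non-trivial $p$-torsion $\mathcal M$; everything else is bookkeeping with the $\Z/p\Z$-graded decomposition of $\pi_*\OO_X$.
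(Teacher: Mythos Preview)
Your argument is correct, and it takes a genuinely different route from the paper's. The paper argues each direction separately: if $\LL \simeq \OO_Y$ it writes $X \simeq Y \times_F F(\sqrt[p]{r})$ explicitly; if $X_{\bar F}$ is disconnected it observes that $\mu_p(\bar F)$ permutes the components transitively, so $X_{\bar F}$ is the trivial $\mu_p$-torsor, and then invokes the short exact sequence of Proposition~\ref{torsorses} to conclude that the Steinitz class is trivial. Your approach instead computes $\dim_{\bar F} H^0(X_{\bar F},\OO_{X_{\bar F}})$ directly from the graded decomposition of $\pi_*\OO_X$, using the classical vanishing $H^0(Y_{\bar F},\mathcal{M}) = 0$ for non-trivial $\mathcal{M} \in \Pic^0(Y_{\bar F})$. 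This is more self-contained in that it does not rely on Proposition~\ref{torsorses}, and it handles both directions at once; on the other hand, the paper's proof makes the structure of $X$ more visible in the disconnected case and fits the torsor-theoretic language used later. Either argument is perfectly adequate here.
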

\begin{proof}
If $\LL \simeq \mathcal{O}_X$, then $\eta$ is scalar multiplication by some $r \in F^\times$. In this case, $X$ is isomorphic to  $Y \times_F F(\sqrt[p]{r})$ as an $F$-scheme, which is not geometrically connected.  Conversely, if $X$ is not geometrically connected, then the $\mu_p$-cover $X_{\bar F} \to Y_{\bar F}$ induces an isomorphism on connected components, forcing $X_{\bar F}$ to be isomorphic to the trivial $\mu_p$-torsor $Y_{\bar F} \times_{\bar F} \mu_p$.  It follows that $\pi$ is in $\ker(H^1(Y, \mu_p) \to H^1(Y_{\bar F}, \mu_p)) \simeq F^*/F^{*p}$, hence has trivial Steinitz class.    
\end{proof}

Suppose now that $\pi \colon X \to Y$ is a geometrically connected $\mu_p$-cover corresponding to $(\LL, \eta)$, so that $\LL \not\simeq \OO_Y$. 
Since every connected finite \'etale 
cover of the abelian variety $Y_{\bar F}$ is itself an abelian variety \cite[\S 18]{mumford:abelianvarieties}, $X$ becomes an abelian variety over the algebraic 
closure $\overline{F}$. It follows that $X$ is a torsor for a certain abelian variety, which we will now identify.
 
Let $ \widehat{\psi} \colon \widehat{Y} \to \widehat{Y}/\langle \LL \rangle$ be the degree $p$ 
isogeny obtained by modding out by $\LL$. Let $\psi \colon A_\LL \to Y$ 
be the dual isogeny, which is also of degree $p$.  Then $\psi$ can itself 
be given the structure of $\mu_p$-cover. Indeed, we have  \[\ker(\psi) \simeq \widehat{\ker(\widehat{\psi})} \simeq \widehat{\Z/p\Z} 
= \mathrm{Hom}(\Z/p\Z, \mathbb{G}_m) \simeq \mu_p.\]  
Note that there are $p-1$ 
different isomorphisms $\ker(\psi) \simeq \mu_p$, corresponding to the different 
$\Z/p\Z$-gradings we can put on $\psi_*\OO_{A_\LL}$.  Exactly one of them 
will have the property that the corresponding $\mu_p$-cover has Steinitz 
class $\LL_1 \subset \psi_*\OO_{A_\LL}$  isomorphic to $\LL$. We choose 
this $\mu_p$-cover structure for $\psi$.    

\begin{lemma}\label{lem:torsor}
Let $\pi \colon X \to Y$ be a $\mu_p$-cover with non-trivial Steinitz 
class $\LL \in \widehat{Y}[p](F)$. Then $\pi$ is a twist of the 
$\mu_p$-cover $\psi \colon A_\LL \to Y$ and $X$ is a torsor for $A_\LL$.  
\end{lemma}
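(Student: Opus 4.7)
The plan is twofold: first use Proposition \ref{torsorses} to show that $\pi$ and $\psi$ become isomorphic as $\mu_p$-covers over $\bar F$, and then descend the translation action of $A_\LL$ on itself along this geometric isomorphism to endow $X$ with an $A_\LL$-torsor structure.

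By construction, both $\pi \colon X \to Y$ and $\psi \colon A_\LL \to Y$ have Steinitz class $\LL \in \widehat{Y}[p](F)$: for $\pi$ this is the hypothesis, and for $\psi$ it is built into the particular choice of $\mu_p$-cover structure made in the paragraph preceding the lemma. Consequently $[\pi]$ and $[\psi]$ have the same image in $\widehat{Y}[p](F)$ under the surjection in Proposition \ref{torsorses}. The exact sequence
\[ 0 \to F^*/F^{*p} \to H^1(Y,\mu_p) \to \widehat{Y}[p](F) \to 0 \]
then forces $[\pi] - [\psi]$ to lie in the image of $F^*/F^{*p}$. Since $\bar F^{*} = \bar F^{*p}$, this difference vanishes after base change to $\bar F$, so $\pi$ and $\psi$ become isomorphic as $\mu_p$-covers of $Y_{\bar F}$. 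That is exactly the assertion that $\pi$ is a twist of $\psi$.

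To promote this twist to an $A_\LL$-torsor structure on $X$, fix an isomorphism $\phi \colon A_{\LL,\bar F} \xrightarrow{\sim} X_{\bar F}$ of $\mu_p$-covers, and transport the translation action $m \colon A_\LL \times A_\LL \to A_\LL$ through $\phi$ to obtain an action $\alpha \colon A_{\LL,\bar F} \times X_{\bar F} \to X_{\bar F}$. To descend $\alpha$ to $F$, it suffices to show that for every $\sigma \in \Gal(\bar F/F)$, the $1$-cocycle $c_\sigma := \phi^{-1} \circ \sigma(\phi)$ commutes with translations. By Proposition \ref{prop:equiv}, any automorphism of $A_\LL$ as a $\mu_p$-cover of $Y$ is multiplication by a $p$-th root of unity, which under the chosen identification $\ker(\psi) \simeq \mu_p$ corresponds to translation by an element of $\ker(\psi) \subset A_\LL$. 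Because $A_\LL$ is commutative, such translations commute with all translations, so the desired compatibility holds and $\alpha$ descends to an $F$-action of $A_\LL$ on $X$. This action is simply transitive because it is so geometrically, making $X$ an $A_\LL$-torsor over $F$.

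The only step that requires more than formal bookkeeping is the identification of the automorphism group of $\psi$ as a $\mu_p$-cover of $Y$ with $\ker(\psi)$ acting by translation. This falls out of Proposition \ref{prop:equiv}: automorphisms of the pair $(\LL,\eta)$ are sections of $\OO_Y^{\times}$ whose $p$-th power is $1$, i.e.\ elements of $\mu_p$, and under the chosen $\Z/p\Z$-grading of $\psi_\ast\OO_{A_\LL}$ these are precisely the translations by $\ker(\psi)$.
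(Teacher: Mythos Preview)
Your proof is correct and follows essentially the same strategy as the paper: both arguments transport the translation action of $A_\LL$ through a geometric isomorphism of $\mu_p$-covers and verify descent by observing that the Galois twist $c_\sigma = \phi^{-1}\circ\sigma(\phi)$ is translation by an element of $\ker(\psi)\simeq\mu_p$, hence commutes with all translations. The paper is merely more explicit, working over $F(\sqrt[p]{s})$ and writing the cocycle as $\sigma\mapsto \sqrt[p]{s}^{\,\sigma}/\sqrt[p]{s}$, a formula that is reused later in Lemma~\ref{rem:groupcohom}.
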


\begin{proof}
If $\psi \colon A_\LL \to Y$ corresponds to $(\LL, \eta)$, then 
$\pi \colon X \to Y$ corresponds to $(\LL, s \eta)$ for some scalar 
$s \in F^*$.  Over $\bar{F}$ there is an isomorphism 
$\rho \colon (A_\LL)_{\bar{F}} \to X_{\bar{F}}$ of $\mu_p$-covers, which satisfies
\[\rho^g(P) = \sqrt[p]{s}^g/\sqrt[p]{s} + \rho(P)\] 
for all $g \in \Gal(\bar F/F)$ 
and $P \in A_\LL(\bar{F})$; here $\sqrt[p]{s}^g/\sqrt[p]{s} \in \mu_p$ and $+$ is the torsor action.     The torsor structure $A_\LL \times X \to X$ is given 
by $(P, Q) \mapsto \rho(P +  \rho^{-1}(Q))$. Using the formula 
for $\rho^g$, we see that this torsor is indeed defined over $F$.  
\end{proof}

We have seen that for each non-zero $\LL \in \widehat{Y}[p](F)$, there 
is in fact a distinguished $\mu_p$-cover with Steinitz class $\LL$, 
namely the cover $A_\LL \to Y$.  This means there must be a distinguished 
isomorphism $\eta \colon \LL^p \simeq \OO_Y$.  We will describe 
this isomorphism $\eta$ in Lemma \ref{lem:distinguishedfunction}, in the context of rational points.  
For simplicity we will specialize to the case where $Y$ is a Jacobian, 
and in particular principally polarized (so that $\widehat{Y} \simeq Y$).  
However, most of what we prove can be generalized to arbitrary abelian 
varieties in a straightforward way.   


\subsection{$\mu_p$-covers of Jacobians}\label{subsec:mupjacobians}

Let $C$ be a smooth projective geometrically integral curve over $F$, 
and let $J = \Pic^0(C)$ be its Jacobian. Let $g$ be the genus of $C$, 
and hence also the dimension of the abelian variety $J$.
Let  $D \in J[p](F)$ be a divisor class of order $p$.  Let
$J \to J/\langle D \rangle$ be the quotient and let 
$\psi \colon A_D\to \widehat{J}$ be the corresponding dual isogeny, 
where $A_D$ is the dual of $J/\langle D \rangle$.   Then $\psi$ is 
a $\mu_p$-cover of $\widehat{J}$ corresponding to a pair $(\LL, \eta)$, 
as in the previous section.  

\begin{remark}As before, we may choose the $\mu_p$-cover structure on $\psi$ 
so that $\LL \in \Pic^0(\widehat{J})(F)$ is mapped 
to $D$ under the isomorphism $\hat{\hat J} \simeq J$.
\end{remark}
From now on, we identify $J$ and $\widehat{J}$ via the principal 
polarization $\lambda \colon J \to \widehat J$ coming from the theta 
divisor of the curve $C$. To make this  explicit, we assume 
that $C$ contains a rational point $\infty \in C(F)$. The theta 
divisor $\Theta \subset J$ is the subvariety of degree 0 divisor 
classes of the form $E - (g-1)\infty$, where $E$ is an effective 
divisor of degree $g -1$.  The isomorphism $J \to \widehat{J}$ 
sends $P$ to $t_P^*\OO_J(\Theta) \otimes \OO_J(\Theta)^{-1}$, 
where $t_P \colon J \to J$ is translation by $P$.  We can also 
describe $\lambda(P)$ as the line bundle on $J$ associated to the 
divisor $[\Theta - P] - [\Theta]$.

After making the identification $J \simeq \widehat{J}$, we may 
view $\psi$ as a $\mu_p$-cover of $J$ and $\eta$ as an 
isomorphism $\LL^{\otimes p} \to \OO_J$. 
By Proposition~\ref{torsorses}, we have the exact sequence
\[ 0 \to F^*/F^{*p}  \to H^1(J, \mu_p) \to J[p](F) \to 0.\]

\section{$\mu_p$-descent}\label{sec:mudescent}

We continue with our assumptions on $J = \Pic^0(C)$. We have seen that to each $D \in J[p](F)$ of order $p$, there is a corresponding $\mu_p$-cover $\psi \colon A_D \to J$ giving rise to the data $(\LL, \eta)$.   These particular $\mu_p$-covers are by construction abelian 
varieties, but general $\mu_p$-covers corresponding to pairs $(\LL, r\eta)$, for $r \in F^\times$, may only 
be torsors for abelian varieties. We characterize those which 
are abelian varieties, or equivalently, those which have rational 
points. 

\subsection{Descent over general fields}
Fix $D \in J[p](F)$  and $(\LL, \eta)$, as above. Given $P \in J(F)$, we may 
consider the $\mu_p$-cover $\psi_P  = t_P \circ \psi \colon A_D \to J$, 
where $t_P \colon J \to J$ is translation by $P$.  The $\mu_p$-cover 
$\psi_P$ is endowed with the same $\mu_p$-action as $\psi$, but different 
structure map to $J$. Since the Steinitz class is in $\Pic^0(J)$, 
it is invariant under translation, and hence $\psi_P$ and $\psi$ 
have isomorphic Steinitz classes. If $\psi_P = (\LL', \eta')$, then we 
can choose an isomorphism $\LL' \simeq \LL$, and under this isomorphism 
we have $\eta' = r_P \eta$ for some $r_P \in F^*$.  Any other choice 
of isomorphism $\LL' \simeq \LL$ differs by a scalar, so the 
element $r_P$ is well-defined up to $F^{*p}$.   

\begin{lemma}\label{lem:injective}
The map $P \mapsto r_P$ induces an injective map 
$\partial^D \colon J(F)/\psi(A_D(F)) \to F^*/F^{*p}$.   

\end{lemma}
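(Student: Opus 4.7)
The plan is to verify in sequence that $r_P \in F^*/F^{*p}$ is well-defined, that the assignment $P \mapsto r_P$ is a group homomorphism, and that its kernel equals $\psi(A_D(F))$; injectivity of $\partial^D$ on the quotient then follows at once. The key structural input throughout is that $\LL \in \Pic^0(J)$, which means $\LL$ is translation-invariant up to isomorphism on $J$.

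Well-definedness is essentially automatic. Observe that $\psi_P = t_P\circ\psi$ is the pullback $t_{-P}^*\psi$, hence corresponds to the pair $(t_{-P}^*\LL, t_{-P}^*\eta)$. By translation invariance some isomorphism $\alpha_P\colon t_{-P}^*\LL \xrightarrow{\sim} \LL$ exists, and then the two trivializations $t_{-P}^*\eta$ and $\eta\circ\alpha_P^{\otimes p}$ of $t_{-P}^*\LL^{\otimes p}$ must differ by a scalar $r_P \in F^*$. Rescaling $\alpha_P$ by $c \in F^*$ scales $r_P$ by $c^p$, so $r_P \bmod F^{*p}$ depends only on $P$. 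For the homomorphism property I would exploit the identity $t_{-(P+Q)} = t_{-Q}\circ t_{-P}$: applying $t_{-Q}^*$ to the defining relation $t_{-P}^*\eta = r_P\,\eta\circ\alpha_P^{\otimes p}$ and combining with $t_{-Q}^*\eta = r_Q\,\eta\circ\alpha_Q^{\otimes p}$ gives $t_{-(P+Q)}^*\eta = r_P r_Q\,\eta\circ(\alpha_Q\circ t_{-Q}^*\alpha_P)^{\otimes p}$. Since $\alpha_Q\circ t_{-Q}^*\alpha_P\colon t_{-(P+Q)}^*\LL \to \LL$ differs from any other such isomorphism (in particular, from the $\alpha_{P+Q}$ used to define $r_{P+Q}$) by a scalar, this forces $r_{P+Q} \equiv r_P r_Q \pmod{F^{*p}}$.

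Finally for the kernel: if $P = \psi(a)$ with $a \in A_D(F)$, the translation $t_{-a}\colon A_D \to A_D$ is $F$-defined, is $\mu_p$-equivariant (because $A_D$ is commutative and $\mu_p = \ker\psi$ acts by translation inside $A_D$), and satisfies $\psi_P\circ t_{-a} = \psi$, exhibiting an isomorphism $\psi \simeq \psi_P$ of $\mu_p$-covers and hence forcing $r_P \in F^{*p}$. Conversely, if $r_P \in F^{*p}$ then $\psi \simeq \psi_P$ as $\mu_p$-covers, so there is an $F$-defined isomorphism $\phi\colon A_D \to A_D$ with $\psi_P\circ\phi = \psi$; evaluating at the identity $0 \in A_D(F)$ gives $P = -\psi(\phi(0)) \in \psi(A_D(F))$. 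The main subtlety of the whole argument is the scalar bookkeeping in the homomorphism step, but it is tamed precisely by translation-invariance of $\LL$, which makes the family of $\alpha_P$'s simultaneously available and composable.
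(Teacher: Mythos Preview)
Your argument is correct and, for the kernel computation, proceeds exactly as the paper does: $r_P \in F^{*p}$ if and only if $\psi_P \simeq \psi$ as $\mu_p$-covers, and such an isomorphism exists if and only if $P \in \psi(A_D(F))$. Your proof is in fact more complete than the paper's, since you explicitly verify the homomorphism property via the translation-invariance of $\LL$, whereas the paper only treats the kernel and leaves the additivity of $P \mapsto r_P$ implicit (it is later recovered from the identification with a Galois-cohomology boundary map).
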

\begin{proof} Note that $r_P\in F^{*p}$ if and only if $\psi_P$ is 
isomorphic as a $\mu_p$-cover to $\psi$. But any isomorphism of 
$\mu_p$-covers induces an isomorphism of $A_D$-torsors, and hence 
must be given by translation by $Q$ for some $Q \in A_D(F)$.  
Translation by $Q$ gives an isomorphism between these 
two $\mu_p$-covers if and only if $P  = \psi(Q)$. 
\end{proof}

For completeness, we state the following result, connecting the 
map $\partial^D$ to a boundary map in Galois cohomology:
\begin{lemma}\label{rem:groupcohom}
The map $\partial^D$ is the boundary map $J(F) \to H^1(F, \mu_p) \simeq F^*/F^{*p}$ in the long exact sequence in 
group cohomology for the short exact sequence of $\Gal(\bar F/F)$-modules 
\[0 \to \mu_p \to A_D(\bar F) \stackrel{\psi}{\longrightarrow} J(\bar F) \to 0.\]
\end{lemma}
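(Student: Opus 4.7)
The plan is to identify $\partial^D(P)$ with the boundary map $\delta(P)$ by computing both as the Kummer class in $H^1(F,\mu_p) \simeq F^\times/F^{\times p}$ of a single $\mu_p$-torsor over $\mathrm{Spec}\,F$ (up to a sign convention).

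First, I would recall the standard geometric description of the connecting homomorphism: for $P \in J(F)$ and any lift $Q \in A_D(\bar F)$ with $\psi(Q) = P$, the $1$-cocycle $\sigma \mapsto Q^\sigma - Q \in \mu_p(\bar F)$ represents $\delta(P) \in H^1(F,\mu_p)$. Equivalently, $\delta(P)$ is the class in $F^\times/F^{\times p}$ of the $\mu_p$-torsor $\psi^{-1}(P)$ over $\mathrm{Spec}\,F$.

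Next, I would unpack $\partial^D(P) = r_P$ via the fiber of $\psi_P$ at the origin. Since $\psi_P = t_P \circ \psi$, the cover $\psi_P$ corresponds to $(\LL_P, \eta_P) = (t_{-P}^*\LL,\, t_{-P}^*\eta)$, with $\LL_P \cong \LL$ in $\Pic^0(J)(F)$. After choosing an $F$-isomorphism $\alpha\colon \LL_P \simeq \LL$, we have $\eta_P = r_P \cdot (\eta \circ \alpha^{\otimes p})$, where $r_P \in F^\times/F^{\times p}$ is $\partial^D(P)$. The key claim is that restricting $(\LL_P, \eta_P)$ to $0 \in J$ yields a $\mu_p$-torsor over $\mathrm{Spec}\,F$ whose Kummer class is $r_P$; this follows from the algebra description of Proposition \ref{prop:equiv} restricted to the fiber at $0$, after trivializing $\LL_0$ so that $\eta_0 = 1$ (which is available because $\psi^{-1}(0) = \ker\psi = \mu_p$ is the trivial $\mu_p$-torsor, forcing $\eta_0 \in F^{\times p}$).

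Finally, I would observe $\psi_P^{-1}(0) = \{R \in A_D : \psi(R) = -P\} = \psi^{-1}(-P)$, whose Kummer class equals $\delta(-P)$ by the first step. This yields $\partial^D(P) = \delta(-P) = -\delta(P)$ in $F^\times/F^{\times p}$, identifying $\partial^D$ with the boundary map up to the sign implicit in one's choice of $\psi_P = t_P \circ \psi$ versus $t_{-P} \circ \psi$ and of the Galois cocycle $\sigma \mapsto Q^\sigma - Q$ versus $\sigma \mapsto Q - Q^\sigma$. The main technical obstacle, I expect, is the computation in the middle step: matching the scalar $r_P$ to the Kummer class of the fiber at $0$ while tracking the choices of the $F$-isomorphism $\alpha$ and the trivialization of $\LL_0$, and verifying that all such auxiliary choices contribute only $p$-th powers that cancel modulo $F^{\times p}$.
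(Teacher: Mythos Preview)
Your argument is correct and takes a route that is close in spirit to, but organised differently from, the paper's. The paper works directly with cocycles: it recalls the twisting formula $\rho^g(\cdot) = \sqrt[p]{r}^g/\sqrt[p]{r} + \rho(\cdot)$ from the proof of Lemma~\ref{lem:torsor} for the $\bar F$-isomorphism $\rho\colon A_D \to (\LL, r\eta)$, then identifies $\rho$ with translation by a preimage $Q$ of $P$, so that $Q^g - Q = \sqrt[p]{r}^g/\sqrt[p]{r}$ on the nose. You instead pull everything back to the fibre over $0 \in J$ and compare the resulting $\mu_p$-torsors over $\mathrm{Spec}\,F$; this is essentially the viewpoint of Lemma~\ref{lem:distinguishedfunction}, and it makes transparent why the auxiliary choices of $\alpha$ and of the trivialisation of $\LL_0$ only contribute $p$-th powers. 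The paper's version is a one-liner once Lemma~\ref{lem:torsor} is available; your version is more geometric and avoids unpacking that lemma again.

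The sign you flag is genuine rather than merely conventional: with $\psi_P = t_P \circ \psi$ one really has $\psi_P^{-1}(0) = \psi^{-1}(-P)$. If you track the direction of $\eta$ carefully (the fibre of $(\LL,\eta)$ at $0$ is $\mathrm{Spec}\,F[z]/(z^p - \eta_0)$, so scaling $\eta$ by $r_P$ multiplies the Kummer class by $r_P$, not $r_P^{-1}$), your computation does give $r_P = \delta(-P)$. The paper's own proof is equally terse on this point, asserting that $\rho$ is translation by $Q$ with $\psi(Q) = P$ without checking the sign. For all the applications in the paper (e.g.\ Proposition~\ref{prop:unramified}) only the image of $\partial^D$ matters, so the sign is harmless; but you are right to note it.
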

\begin{proof}
The boundary map $J(F) \to H^1(F, \mu_p)$ sends $P \in J(F)$ to the cocycle $c \colon \Gal(\bar F/F) \to \mu_p \simeq A_D[\psi]$ given by $g \mapsto Q^g - Q$, where $Q \in A_D$ is such that $\psi(Q) = P$. We must show that this cocycle agrees with the cocycle $g \mapsto  \sqrt[p]{r}^g/\sqrt[p]{r}$, where $r = r_P$.  From the proof of Lemma \ref{lem:torsor}, we see that the $A_D$-torsors $(\LL, r\eta)$ and $(\LL, \eta)$ are isomorphic (over $\bar F$) via translation by $Q$. By the explicit formula given there, this exactly means that $Q^g - Q$ is equal to the element $\sqrt[p]{r}^g/\sqrt[p]{r} \in \mu_p$.         
\end{proof}

\begin{lemma}\label{lem:rationalpoint}
The image of $\partial^D$ is the set of $r \in F^*/F^{*p}$ such that the 
$\mu_p$-cover $(\LL, r\eta)$ has a rational point.
\end{lemma}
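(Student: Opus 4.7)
The plan is to prove both inclusions directly from the definitions, using Lemma \ref{lem:torsor} to extract an $F$-isomorphism from any rational point on $(\LL, r\eta)$.

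For the easy direction, suppose $r = r_P$ for some $P \in J(F)$. By construction of $\partial^D$, the $\mu_p$-cover $(\LL, r\eta)$ is isomorphic to $\psi_P = t_P \circ \psi \colon A_D \to J$. The total space of $\psi_P$ is literally $A_D$ (only the map to $J$ has been changed), so it carries the $F$-rational identity point of $A_D$. Hence $(\LL, r\eta)$ has a rational point.

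For the other direction, suppose $X = (\LL, r\eta)$ has a rational point $Q \in X(F)$; write $\pi \colon X \to J$ for the structure map and set $P = \pi(Q) \in J(F)$. By Lemma \ref{lem:torsor}, $X$ is an $A_D$-torsor with action map $A_D \times X \to X$ defined over $F$. The rational point $Q$ trivializes this torsor: the map $\rho_Q \colon A_D \to X$, $R \mapsto R \cdot Q$, is an $F$-isomorphism sending $0 \mapsto Q$. I then need to compute the composition $\pi \circ \rho_Q \colon A_D \to J$ and check it equals $\psi_P$. This can be done by base changing to $\bar F$, where the torsor action is given by $(R,R') \mapsto \rho(R + \rho^{-1}(R'))$ for the geometric isomorphism $\rho$ of Lemma \ref{lem:torsor} (which satisfies $\pi \circ \rho = \psi$), so $\pi(\rho_Q(R)) = \psi(R + \rho^{-1}(Q)) = \psi(R) + P = \psi_P(R)$. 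Since this identity holds geometrically and both maps are defined over $F$, it holds over $F$. Thus $\rho_Q$ is an $F$-isomorphism of $\mu_p$-covers between $\psi_P$ and $(\LL, r\eta)$, so $r \equiv r_P \pmod{F^{*p}}$, i.e.\ $r$ lies in the image of $\partial^D$.

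The only subtle point is the second step: one must verify that the trivialization of the $A_D$-torsor structure on $X$ obtained from the rational point $Q$ is not merely an abstract isomorphism of $F$-varieties but actually respects the $\mu_p$-cover structures over $J$ (up to precomposing with translation by $P$). This is where Lemma \ref{lem:torsor}, and in particular the explicit formula for the torsor action, does the work. Everything else is a matter of unwinding definitions, and no Galois cohomology is required, consistent with the paper's geometric point of view.
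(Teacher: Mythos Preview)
Your proof is correct and follows essentially the same approach as the paper's own argument, which is quite terse: the paper simply notes that a $\mu_p$-cover in the image is isomorphic to $A_D$ as a variety (hence has a rational point), and conversely that a rational point trivializes the underlying $A_D$-torsor ``up to translation by a point $P$.'' Your version unpacks this last phrase explicitly, using the torsor-action formula from Lemma~\ref{lem:torsor} to verify that the trivialization $\rho_Q$ satisfies $\pi\circ\rho_Q=\psi_P$, and correctly observes that $\mu_p$-equivariance comes for free from the $A_D$-equivariance of $\rho_Q$ (since $\mu_p\subset A_D$).
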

\begin{proof}
Every torsor in the image clearly has a rational point since it is 
isomorphic to $A_D$ as a variety. Conversely, if a $\mu_p$-cover 
$X \to J$ of the form $(\LL, r\eta)$ has a rational point then the 
underlying $A_D$-torsor is isomorphic to the trivial $A_D$-torsor up 
to translation by a point $P$. Hence $\partial^D(-P) = r$.
\end{proof}

\begin{remark}
It follows that for a $\mu_p$-cover $\pi \colon X \to J$ 
with Steinitz class $\LL$, $X$ is isomorphic to $A_D$ (as varieties) 
if and only if $\pi$ corresponds to $(\LL, r\eta)$, with $r$ in the 
image of $\partial^D$.  
\end{remark}

The following lemma is immediate from the definitions, and can be used 
to give an explicit formula for the homomorphism $\partial^D$. 

\begin{lemma}\label{lem:function1}
Let $F(J)$ be the function field of $J$ and view 
$\eta^{-1} \colon \OO_J \to \LL^p$ as a global section
of $\LL^p$.  Fix an embedding of  $\LL$ as a subsheaf of $F(J)$, so that $\eta^{-1}$ is 
a non-zero element $f$ of $F(J)$.  Let $Q$ be such that $Q$ and $Q + P$ 
are in a domain of definition for $f$. Then 
$\partial^D(P) = r_P = f(P+Q)/f(Q)$, up to $p$-th powers.     
\end{lemma}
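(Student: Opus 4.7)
The plan is to trace how the trivialization changes under $\psi_P = t_P \circ \psi$. Since pushforward along the isomorphism $t_P$ equals pullback along $t_{-P}$, the pair corresponding to $\psi_P$ is $(t_{-P}^*\LL,\, t_{-P}^*\eta)$. Because $\LL \in \Pic^0(J)$, we may choose an isomorphism $\phi_P \colon t_{-P}^*\LL \to \LL$; concretely, this is multiplication by some $u_P \in F(J)^*$ with $\mathrm{div}(u_P) = t_P(D) - D$, where $D$ is any divisor representing $\LL$ and $t_P(D)$ is its translate by $P$. Transporting $t_{-P}^*\eta$ through $\phi_P^p$ then yields an isomorphism $\eta_P \colon \LL^p \simeq \OO_J$, and by definition $r_P$ is the scalar with $\eta_P = r_P\,\eta$, well-defined modulo $F^{*p}$.

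Next I unwind this under the fixed embedding $\LL \hookrightarrow F(J)$: the global section $\eta^{-1}(1)$ of $\LL^p$ is exactly $f$ (with $\mathrm{div}(f) = -pD$), and the global section $(t_{-P}^*\eta)^{-1}(1)$ of $t_{-P}^*\LL^p$ corresponds under the inherited embedding to $f \circ t_{-P}$. Multiplying by $u_P^p$ (the effect of $\phi_P^p$) gives $\eta_P^{-1}(1) = u_P^p \cdot (f \circ t_{-P})$. Comparing with the identity $\eta_P^{-1}(1) = r_P^{-1}\,f$ yields $r_P = f \big/ (u_P^p \cdot (f \circ t_{-P}))$, and the $u_P^p$ factor vanishes modulo $p$-th powers, leaving $r_P \equiv f/(f \circ t_{-P})$ in $F(J)^*/F(J)^{*p}$. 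Evaluating at any admissible $Q_0$ and then substituting $Q_0 = Q + P$ produces the asserted formula $r_P = f(P+Q)/f(Q)$.

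Independence of the chosen evaluation point follows automatically from the setup: the divisor of $f/(f \circ t_{-P})$ is $-pD - (-p\,t_P(D)) = p(t_P(D) - D) = \mathrm{div}(u_P^p)$, so $f/(f \circ t_{-P}) = c\,u_P^p$ for some constant $c \in F^*$, and the class of $r_P$ in $F^*/F^{*p}$ is just $c$ regardless of $Q$. The only real obstacle is bookkeeping: correctly identifying pushforward versus pullback under $t_P$ and the direction of the isomorphism $\phi_P$, after which everything falls out directly from the definitions, just as the authors indicate.
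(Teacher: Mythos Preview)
Your proof is correct and is precisely the unwinding of definitions that the paper has in mind; the paper itself offers no argument beyond the sentence ``immediate from the definitions,'' and your computation of $\eta_P^{-1}(1) = u_P^p \cdot (f \circ t_{-P})$ together with the divisor check that $f/(f\circ t_{-P}) = r_P\,u_P^p$ is exactly the bookkeeping required. The only cosmetic issue is the reuse of the symbol $D$ for both the torsion point in $J[p](F)$ and the divisor representing $\LL$, which could momentarily confuse a reader but causes no mathematical problem.
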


Thinking of $\eta^{-1}$ as a function on $J$ allows us to distinguish 
the unique $\mu_p$-cover $(\LL, \eta)$ corresponding to 
$\psi \colon A_D \to J$ among all $\mu_p$-covers with Steinitz 
class $\LL$, as promised.    

\begin{lemma}\label{lem:distinguishedfunction}
The $\mu_p$-cover corresponding to $(\LL, \eta)$, which is isomorphic 
to the $\mu_p$-cover $A_\LL = A_D \to J$, is characterized  
among all $\mu_p$-covers with Steinitz class $\LL$ by the fact that the value $f(0_J)$ of the function $f = \eta^{-1} \in F(J)$ 
at $0_J$ is a $p$-th power in $F^*$. (Here we assume that $\LL$ is chosen within its isomorphism class so that $f(0_J) \in F^\times$.)      
\end{lemma}

\begin{proof}
The  $\mu_p$-cover $A_D \to J$ is distinguished 
among $\mu_p$-covers with Steinitz class $\LL$ by the fact that the 
fiber above $0$ has a rational point. Indeed, if $\pi \colon X \to J$ 
is a $\mu_p$-cover of type $(\LL, r\eta)$ with a rational point 
$Q \in X(F)$ above $0 \in J(F)$, then $\pi = \psi_P$ for some 
$P \in J(F)$, and $\pi^{-1}(0) = \psi^{-1}(-P)$. It follows that 
$P \in \psi(A_D(F))$ and hence $r$ is a $p$-th power, or in other 
words $\pi$ is isomorphic to $\psi$ as $\mu_p$-covers. 

On the other hand, the pullback of the $\mu_p$-cover $(\LL, \eta)$ on $J$ to $\mathrm{Spec}\,  F$, via the inclusion 
$\{0_J\} \hookrightarrow J$, is  
$\mathrm{Spec}\, k[z]/(z^p - h)$ where $h = f(0_J)$.  This has an 
$F$-rational point if and only if $h$ is a $p$-th power.  
\end{proof}

Let $\Sym^g C = C^g/S_g$ be the $g$-th symmetric power of $C$. Points of $\Sym^g C$ correspond to effective degree $g$ divisors $E$ on $C$.  
Recall that the map $\Sym^g C \to J$ sending $E \mapsto E - g\infty$ 
is birational \cite[Thm.\ 5.1]{milne:jacobianvarieties}, hence induces an isomorphism of function fields 
$F(\Sym^g C) \simeq F(J)$.  

\begin{lemma}\label{lem:function2}
Suppose $pD = \mathrm{div}(\tilde f)$ for some $\tilde f \in F(C)$.  
Then $\LL\simeq\OO_J(\tilde D)$ for a divisor $\tilde D$ on $J$ such 
that $p\tilde D =  \mathrm{div}(f)$, 
where $f \in F(J) \simeq F(\Sym^g C)$ is the rational function  
$f(\sum_{i = 1}^g (x_i,y_i) - g\infty) = \prod_{i = 1}^g \tilde f(x_i,y_i)$.  
\end{lemma}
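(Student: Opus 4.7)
The plan is to pull the assertion back to $C^g$ via the Abel--Jacobi map and reduce to a computation on $C$.  Let $\iota\colon C\to J$ be the embedding $P\mapsto [P]-[\infty]$ and let $\sigma\colon C^g\to J$ be the map $(P_1,\ldots,P_g)\mapsto \sum_i\iota(P_i)$, which factors as the quotient $C^g\to\Sym^g C$ followed by the birational morphism $\Sym^g C\to J$.  Fix an honest degree-$0$ divisor $D\in\mathrm{Div}^0(C)$ representing the class $D\in J[p](F)$; by hypothesis $pD=\mathrm{div}(\tilde f)$.

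The first key step is to identify $\sigma^*\LL$.  Under the canonical identifications $\Pic^0(C)\simeq J$ and $\Pic^0(J)\simeq\widehat J$, the pullback $\iota^*$ is inverse (up to a sign of convention) to the principal polarization $\lambda$, so $\iota^*\LL\simeq\OO_C(D)$.  Since $\LL\in\Pic^0(J)$, the theorem of the cube gives $\mu^*\LL\simeq\bigotimes_i p_i^*\LL$ for the $g$-fold addition map $\mu\colon J^g\to J$, and pulling back along $\iota^g\colon C^g\to J^g$ yields
$$\sigma^*\LL \;\simeq\; \bigotimes_{i=1}^g\pi_i^*\iota^*\LL \;\simeq\; \OO_{C^g}\Bigl(\sum_{i=1}^g\pi_i^*D\Bigr).$$
Taking the $p$-th tensor power and using $pD=\mathrm{div}(\tilde f)$ gives
$$(\sigma^*\LL)^{\otimes p} \;\simeq\; \OO_{C^g}\Bigl(\sum_i\pi_i^*\mathrm{div}(\tilde f)\Bigr) \;=\; \OO_{C^g}\Bigl(\mathrm{div}\bigl(\prod_i\tilde f(P_i)\bigr)\Bigr),$$
so the rational function $\prod_i\tilde f(P_i)$ is a trivializing section of $(\sigma^*\LL)^{\otimes p}$ on $C^g$.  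Under the identification of function fields $F(C^g)^{S_g}=F(\Sym^g C)=F(J)$, this function is precisely $\sigma^*f$.

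The last step is to descend from $C^g$ back to $J$.  The divisor $\sum_i\pi_i^*D$ is manifestly $S_g$-invariant and generically supported away from the diagonals, so it descends through $C^g\to\Sym^g C$ to a divisor on $\Sym^g C$, which in turn transfers along the birational map $\Sym^g C\to J$ to a divisor $\tilde D$ on $J$; the calculation above shows $\OO_J(\tilde D)\simeq\LL$.  Since $\sigma^*(p\tilde D)=p\sum_i\pi_i^*D=\mathrm{div}(\sigma^*f)$ as divisors on $C^g$, and $\sigma^*$ is injective on divisors (because $\sigma$ is dominant), we conclude $p\tilde D=\mathrm{div}(f)$ in $\mathrm{Div}(J)$.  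The main point to verify is the descent step, since one must keep track of ramification of $C^g\to\Sym^g C$ along the diagonals and of the exceptional locus of $\Sym^g C\to J$; however, because both the divisor $\sum_i\pi_i^*D$ and the trivializing function $\prod_i\tilde f(P_i)$ are built symmetrically from data on $C$ and the descent of $S_g$-invariant line bundles is standard, this causes no trouble.
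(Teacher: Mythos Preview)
Your argument is correct but takes a genuinely different route from the paper's. The paper works directly with the theta divisor: under the simplifying assumption $D=\infty-Q$, it identifies $\tilde D$ explicitly as $[\Theta-D]-[\Theta]$, observes that the translate $\Theta-D$ is exactly the zero locus of $f$ on $\Sym^g C$ (with multiplicity $p$) while $\Theta$ is the pole locus, and reads off $\mathrm{div}(f)=p[\Theta-D]-p[\Theta]$ by inspection. Your approach instead pulls back along $\sigma\colon C^g\to J$, invokes the theorem of the cube to factor $\sigma^*\LL$ as a box product, and then descends the resulting $S_g$-invariant divisor.

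What each buys: the paper's computation is shorter, pins down $\tilde D$ concretely in terms of $\Theta$, and makes the later applications (Lemma~\ref{lem:torsionevaluation}, the explicit formulas for $\partial^H$) transparent. Your argument is more uniform---it handles a general $D$ without the reduction to $D=\infty-Q$---and makes the structural reason for the formula clear, but it leaves the descent step (through the ramification of $C^g\to\Sym^g C$ and across the exceptional locus of $\Sym^g C\to J$) only lightly justified. For the purposes of this paper, where only the function $f$ and the relation $p\tilde D=\mathrm{div}(f)$ are used downstream, either argument suffices.
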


\begin{proof}
Assume, for simplicity, that $D = \infty - Q$ for some $Q \in C(F)$. 
Under the polarization $J \to \widehat{J}$, the point $D$ gets sent to 
the divisor $[\Theta - D] - [\Theta]$. Note that 
\[\Theta - D = \{E + Q - g\infty \colon E \, \mbox{effective of degree } g-1\}.\]
is the locus of poles of the function $f$.  Similarly, $\Theta$ is the zero locus. Taking into account multiplicities, the divisor of $f$ 
is $p[\Theta - D] - p[\Theta]$, as claimed. The general case 
where $D = \sum_j (\infty - Q_j)$ is similar. 
\end{proof}

Finally, we will use a generalization of the map $\partial$ and 
Lemma \ref{lem:injective}.  Let $H = \{D_1, \ldots, D_m\} \subset J[p](F)$ 
be a subset of $\FF_p$-linearly independent elements. For each 
$i = 1, \ldots, m$, let $\psi_i \colon A_i \to J$ be the $\mu_p$-covers 
corresponding to $D_i$.  Let $A_H = \widehat{J/\langle H\rangle}$ and 
let $\psi_H \colon A_H \to J$ be the isogeny dual to 
$J \to J/\langle H\rangle$.  Then we have a homomorphism 
\[\tilde \partial^H \colon J(F) \longrightarrow \prod_{i = 1}^m F^*/F^{*p}\]
sending $P$ to $(\partial^{D_1}(P), \ldots, \partial^{D_m}(P))$.   

\begin{lemma}
The map $\tilde \partial^H$ induces an injection 
$\partial^H \colon J(F)/\psi_H(A_H(F)) 
\hookrightarrow \bigoplus_{i = 1}^m F^*/F^{*p}$.   
\end{lemma}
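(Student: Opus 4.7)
The plan is to interpret $\tilde\partial^H$ as a connecting homomorphism in Galois cohomology attached to the isogeny $\psi_H$, and then to split this connecting homomorphism as a product of the one-variable boundary maps $\partial^{D_i}$ via a Cartier-duality decomposition of $\ker(\psi_H)$. This will reduce injectivity to the single-divisor case of Lemma~\ref{lem:injective} combined with the cohomological interpretation of Lemma~\ref{rem:groupcohom}.

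First I would verify the easy containment $\psi_H(A_H(F)) \subseteq \ker(\tilde\partial^H)$. For each $i$, the inclusion $\langle D_i\rangle \hookrightarrow \langle H\rangle$ yields a quotient map $J/\langle D_i\rangle \twoheadrightarrow J/\langle H\rangle$; dualizing gives an isogeny $\pi_i \colon A_H \to A_i$ with $\psi_H = \psi_i \circ \pi_i$.  Hence $\psi_H(A_H(F)) \subseteq \psi_i(A_i(F))$ for every $i$, and Lemma~\ref{lem:injective} gives $\partial^{D_i}(\psi_H(A_H(F))) = 0$.

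For the substantive direction, I would work with the short exact sequence $0 \to \ker(\psi_H) \to A_H \to J \to 0$. Its long exact sequence in Galois cohomology produces a connecting map $J(F) \to H^1(F, \ker(\psi_H))$ whose kernel is precisely $\psi_H(A_H(F))$.  By Cartier duality, $\ker(\psi_H)$ is dual to $\ker(J \to J/\langle H\rangle) = \langle H\rangle \cong \bigoplus_i \langle D_i\rangle$; since each $D_i$ is $F$-rational, this decomposition is $\Gal(\bar F/F)$-equivariant and dualizes to a Galois-equivariant identification $\ker(\psi_H) \cong \prod_i \ker(\psi_i) \cong \mu_p^m$, under which the map on kernels induced by $\pi_i$ is the $i$-th projection.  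Naturality of the boundary map applied to the morphism of short exact sequences $(A_H \to J) \to (A_i \to J)$, combined with Lemma~\ref{rem:groupcohom} applied in each factor, then identifies the composition
\[J(F) \longrightarrow H^1(F, \ker(\psi_H)) \cong \prod_i H^1(F, \mu_p) \cong \prod_i F^*/F^{*p}\]
coordinatewise with $\tilde\partial^H$. Its kernel is $\psi_H(A_H(F))$, yielding the injection $\partial^H$.

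The main obstacle is the bookkeeping in the middle step: pinning down the Galois-equivariant splitting of $\ker(\psi_H)$ and confirming that $\pi_i \colon A_H \to A_i$ induces the $i$-th projection on kernels, so that the $i$-th coordinate of the connecting map really is $\partial^{D_i}$.  Everything else is formal from the long exact sequence of cohomology.
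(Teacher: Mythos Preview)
Your argument is correct. The Cartier-duality identification $\ker(\psi_H)\cong\prod_i\ker(\psi_i)\cong\mu_p^m$ is Galois-equivariant because the $D_i$ are $F$-rational and linearly independent, and the check that $\pi_i$ induces the $i$-th projection on kernels is exactly the dual of the $i$-th inclusion $\langle D_i\rangle\hookrightarrow\bigoplus_j\langle D_j\rangle$. Naturality of the boundary map together with Lemma~\ref{rem:groupcohom} then identifies the connecting homomorphism for $\psi_H$ with $\tilde\partial^H$, and exactness of the long exact sequence gives the kernel.

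The paper, however, takes a deliberately non-cohomological route (cf.\ the remark in the introduction that Galois cohomology is avoided to emphasize the geometry). It treats only $m=2$ and argues directly: if $\partial^{D_1}(P)=\partial^{D_2}(P)=0$, then by Lemma~\ref{lem:injective} there are $Q_i\in A_i(F)$ with $\psi_i(Q_i)=P$, and one constructs by hand a \emph{unique} $Q\in A_H(\bar F)$ with $g_i(Q)=Q_i$ for both $i$, using that $\ker(g_1)$ maps isomorphically onto $\ker(\psi_2)$ via $g_2$; uniqueness forces $Q$ to be Galois-stable. Your cohomological approach is cleaner, handles all $m$ uniformly without induction, and makes the structural reason for injectivity transparent, at the cost of importing the long exact sequence machinery the paper otherwise sidesteps. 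The paper's approach is more elementary and self-contained, but the explicit lift is slightly fiddly and the general case is left to an inductive argument.
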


\begin{proof}
We prove this in the case $m = 2$, which is the only case we 
will use. The general case follows by 
an inductive argument.  Suppose $\tilde \partial^{D_1}(P) = 0$ and 
$\tilde \partial^{D_2}(P) = 0$, so that $P = \psi_i(Q_i)$ for some 
$Q_i \in A_i(F)$ by Lemma \ref{lem:injective}. 
Let $g_i \colon A_H \to A_i$ 
be the natural maps, of degree $p$; note that $\psi_1 g_1 = \psi_2 g_2$.  The fiber diagram

\begin{equation}\label{eq:fiberdiagram}
\begin{array}{ccc}
A_H
&\longrightarrow&A_2 \\
\downarrow & &\downarrow \\
A_1&
\longrightarrow&J,\\
\end{array}
\end{equation}
shows that there is a unique point $Q$ in $A_H(\bar F)$ such that
$g_i(Q) = Q_i$ for $i = 1,2$. 
The uniqueness of $Q$ implies that it is $\Gal(\bar F/F)$-stable and 
so we have $P = \psi_H(Q)$ with $Q \in A_H(F)$.  This shows that 
$\partial^H$ is injective.  
\end{proof}

\subsection{Descent over global fields}

Suppose now that $C$ is a curve over $\Q$.  The preceding discussion 
applies for $F =\Q$, but also for $F =\Q_\ell$ for any prime 
$\ell \leq\infty$.  Having fixed $D \in J[p](\Q)$, let 
 \[\mathrm{Sel}(A_D) \subset \Q^*/\Q^{*p}\] 
be the subgroup of classes $r$ with the property that for every 
prime~$\ell$, the class of $r$ in $\Q_\ell^*/\Q_\ell^{*p}$  is in the 
image of 
$\partial^D \colon J(\Q_\ell)/\psi(A_D(\Q_\ell)) \to \Q_\ell^*/\Q_\ell^{*p}$. In other words, an element 
of $\Sel(A_D)$ is a $\mu_p$-cover $X \to J$ with Steinitz class $D$ 
and such that $X(\Q_\ell) \neq \emptyset$ for every prime $\ell$. 

Recall that if $A$ is an abelian variety over $\Q$, then $\Sha(A)$ is 
the group of $A$-torsors which are trivial over $\Q_\ell$, for all 
primes $\ell \leq \infty$. 

\begin{proposition}\label{prop:fundexactseq}
Let $\Sha(A_D)$ be the Tate-Shafarevich group of $A_D$. There is an 
exact sequence 
\[0 \to J(\Q)/\psi(A_D(\Q)) \to \Sel(A_D) \to \Sha(A_D)[\psi] \to 0\]
where $\Sha(A_D)[\psi]$ is the kernel of the map $\Sha(A_D) \to \Sha(J)$ 
induced by $\psi$. 
\end{proposition}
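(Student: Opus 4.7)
The plan is to construct the two maps of the sequence geometrically, using the lemmas already established in this section, and then verify exactness at each of the three spots.

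The left map is the global descent map $\partial^D \colon J(\Q)/\psi(A_D(\Q)) \hookrightarrow \Q^*/\Q^{*p}$ from Lemma \ref{lem:injective}; the explicit formula of Lemma \ref{lem:function1} shows that this global map commutes with localization at every prime $\ell$, so its image lies in $\Sel(A_D)$ by definition. The right map sends $r \in \Sel(A_D)$ to the isomorphism class of the $A_D$-torsor $X_r$ underlying the $\mu_p$-cover $(\LL, r\eta)$ provided by Lemma \ref{lem:torsor}. The definition of $\Sel(A_D)$ together with Lemma \ref{lem:rationalpoint} guarantees $X_r(\Q_\ell) \neq \emptyset$ for every $\ell$, so $[X_r] \in \Sha(A_D)$. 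Moreover, the class lies in $\Sha(A_D)[\psi]$ because the pushforward $J$-torsor $X_r \times^{A_D} J$ is canonically trivialized by the $A_D$-equivariant structure morphism $X_r \to J$ (with $A_D$ acting on $J$ via $\psi$).

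Exactness at $J(\Q)/\psi(A_D(\Q))$ is the content of Lemma \ref{lem:injective}. Exactness at $\Sel(A_D)$ follows from Lemma \ref{lem:rationalpoint}: the kernel of $\Sel(A_D) \to \Sha(A_D)$ consists precisely of those $r$ for which $X_r$ has a $\Q$-rational point, which is exactly the image of the global $\partial^D$.

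The main obstacle is surjectivity onto $\Sha(A_D)[\psi]$. Given $[Y] \in \Sha(A_D)[\psi]$, triviality of the pushforward $J$-torsor $Y \times^{A_D} J$ is equivalent to the existence of an $A_D$-equivariant morphism $Y \to J$ (where $A_D$ acts on $J$ via $\psi$). Since $\mu_p = \ker \psi$ then acts along the fibers of this map, it equips $Y$ with the structure of a geometrically connected $\mu_p$-cover of $J$. Over $\bar\Q$ this $\mu_p$-cover is isomorphic to $(\LL, \eta)$ because $Y$ is trivial as an $A_D$-torsor over $\bar\Q$; since the Steinitz class of $Y$ is Galois-invariant and $\LL \in J[p](\Q)$, the two agree over $\Q$. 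Thus $Y$ is isomorphic to $(\LL, r\eta)$ for some $r \in \Q^*/\Q^{*p}$, and the hypothesis $[Y] \in \Sha(A_D)$ combined with Lemma \ref{lem:rationalpoint} forces $r \in \Sel(A_D)$. The key geometric point, and the crux of the argument, is that the vanishing of the image of $[Y]$ in $\Sha(J)$ is exactly the obstruction to promoting the abstract $A_D$-torsor $Y$ back to a $\mu_p$-cover of $J$.
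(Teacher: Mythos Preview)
Your argument is correct and, for exactness on the left and in the middle, matches the paper's proof exactly (it too appeals to Lemmas~\ref{lem:injective} and~\ref{lem:rationalpoint}). Where you show the image lands in $\Sha(A_D)[\psi]$ via triviality of the pushforward $J$-torsor $X_r \times^{A_D} J$, the paper instead observes that the defining cocycle $g \mapsto \sqrt[p]{r}^g/\sqrt[p]{r}$ takes values in $\mu_p = \ker\psi$ and hence dies under $\psi$; these are two phrasings of the same fact. The one substantive difference is that the paper does \emph{not} prove exactness on the right: it remarks that surjectivity ``can be proved using direct geometric arguments, but is most easily seen using Lemma~\ref{rem:groupcohom},'' and then omits the proof since it is never used in the sequel. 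Your construction---recover a $\mu_p$-cover structure $Y \to J$ from a trivialization of $\psi_*Y$, identify its Steinitz class as $\LL$, and hence realize $Y$ as $(\LL, r\eta)$---is precisely the direct geometric argument the paper alludes to. (One small point worth making explicit: the trivialization of $\psi_*Y$ is only well-defined up to translation by $J(\Q)$, which changes $r$ by an element of $\partial^D(J(\Q))$ and so does not affect the image in $\Sha(A_D)$.)
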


\begin{proof}
The map $\Sel(A_D) \to \Sha(A_D)[\psi]$ sends the $\mu_p$-cover $X \to J$ 
to the underlying $A_D$-torsor (c.f.\ Lemma \ref{lem:torsor}). The 
cocycle $c \colon \Gal(\bar F/F) \to A_D(\bar F)$ with $c(g) = \sqrt[p]{s}^g/\sqrt[p]{s}$ determines this torsor and  
has image in $\mu_p \simeq \ker(\psi)$, so the cocycle indeed becomes 
trivial in $\Sha(J)$.  The exactness of the sequence in the middle 
follows from Lemma \ref{lem:rationalpoint}.  The exactness on the right 
can be proved using direct geometric arguments, but is most easily 
seen using Lemma \ref{rem:groupcohom}.  Since we will not actually 
use the exactness on the right, we omit the proof.        
\end{proof}

The group $\Sel(A_D)$ is isomorphic to the usual Selmer group 
\[\Sel_\psi(A_D) \subset H^1(F, A_D[\psi]) 
\simeq H^1(F, \mu_p) \simeq F^*/F^{*p}.\]  
In particular, 
it is finite. This can also be seen from the following proposition.

\begin{proposition}\label{prop:unramified}
Suppose  $\ell \neq p$ is a prime of good reduction for $J$. Then the image 
of $\partial^D \colon J(\Q_\ell) \to \Q_\ell^*/\Q_\ell^{*p}$ is the 
subgroup $\Z_\ell^*/\Z_\ell^{*p}$. 
\end{proposition}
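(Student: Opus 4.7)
The plan is to establish the inclusion $\mathrm{im}(\partial^D) \subseteq \Z_\ell^*/\Z_\ell^{*p}$ and then match cardinalities to force equality, using the injectivity of the induced map on $J(\Q_\ell)/\psi(A_D(\Q_\ell))$ from Lemma \ref{lem:injective}.

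For the containment, I would first observe that $A_D$ inherits good reduction at $\ell$ from $J$ via N\'eron--Ogg--Shafarevich (the two are $\Q_\ell$-isogenous, so their $\ell'$-adic Tate modules agree up to isogeny for any $\ell' \neq \ell$, hence the inertia action is simultaneously trivial). Since $\deg \psi = p$ is coprime to $\ell$, the kernel $A_D[\psi] \simeq \mu_p$ is étale over $\Z_\ell$, and $\psi$ extends to an étale $\mu_p$-cover of N\'eron models over $\mathrm{Spec}\,\Z_\ell$. Given any $P \in J(\Q_\ell) = J(\Z_\ell)$, I would lift its reduction through the étale map $A_D(\overline{\FF}_\ell) \to J(\overline{\FF}_\ell)$, then apply Hensel's lemma along the smooth N\'eron model to produce $\tilde P \in A_D(\Z_\ell^{\mathrm{unr}})$ with $\psi(\tilde P) = P$. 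By Lemma \ref{rem:groupcohom}, the cocycle $g \mapsto g(\tilde P) - \tilde P$ representing $\partial^D(P)$ is then trivial on the inertia group $I_\ell$, so $\partial^D(P)$ lies in the unramified subgroup $H^1_{\mathrm{unr}}(\Q_\ell, \mu_p)$, which under the Kummer identification is exactly $\Z_\ell^*/\Z_\ell^{*p}$.

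To match cardinalities, I would invoke the standard snake-lemma argument applied to the reduction sequence $0 \to \widehat{A_D}(\Z_\ell) \to A_D(\Q_\ell) \to A_D(\FF_\ell) \to 0$ (and likewise for $J$), together with the fact that $\psi$ restricts to an isomorphism on formal groups when $\ell \nmid \deg \psi$ and that $A_D(\FF_\ell) \to J(\FF_\ell)$ has kernel and cokernel of order $|A_D[\psi](\FF_\ell)|$ by finiteness of isogenous groups over finite fields. This gives $|J(\Q_\ell)/\psi(A_D(\Q_\ell))| = |A_D[\psi](\Q_\ell)| = |\mu_p(\Q_\ell)|$. On the other hand, the decomposition $\Z_\ell^* \simeq \mu_{\ell-1}(\Q_\ell) \times (1+\ell\Z_\ell)$ combined with unique $p$-divisibility of $1+\ell\Z_\ell$ for $\ell \neq p$ yields $|\Z_\ell^*/\Z_\ell^{*p}| = |\mu_p(\Q_\ell)|$ as well. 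The injection from Lemma \ref{lem:injective} then forces the image to fill all of $\Z_\ell^*/\Z_\ell^{*p}$.

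The main obstacle is the Hensel-lifting step: making precise that each $P \in J(\Q_\ell)$ has a preimage under $\psi$ in $A_D(\Q_\ell^{\mathrm{unr}})$. This uses both hypotheses essentially, since good reduction is needed for the N\'eron model to be smooth (so that Hensel applies), and $\ell \neq p$ is needed so that $\psi$ is étale on the special fiber. Once the lifting is in hand the remainder is purely formal.
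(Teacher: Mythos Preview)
Your argument is correct, but it proceeds along a different axis than the paper's. The paper argues both inclusions geometrically through the torsor interpretation (Lemma~\ref{lem:rationalpoint}): for $\Z_\ell^*/\Z_\ell^{*p} \subseteq \mathrm{im}(\partial^D)$ it observes that the associated $A_D$-torsor is split by an unramified extension, hence extends to a torsor for the N\'eron model over $\Z_\ell$, and any such torsor has a $\Z_\ell$-point (Lang on the special fiber plus smoothness); for the reverse inclusion it notes that a torsor in the image is abstractly isomorphic to $A_D$, so it has good reduction and extends over $\Z_\ell$, forcing the twisting scalar $r$ to be a unit. You instead establish the containment $\mathrm{im}(\partial^D) \subseteq \Z_\ell^*/\Z_\ell^{*p}$ cohomologically via Lemma~\ref{rem:groupcohom} and an unramified lift, and then upgrade to equality by a cardinality match using the snake lemma on the reduction filtration together with $|A_D(\FF_\ell)| = |J(\FF_\ell)|$. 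Both routes are standard; the paper's stays inside its $\mu_p$-cover language and avoids explicit cocycles (in keeping with its stated preference), whereas yours is the textbook Selmer-group computation and is arguably more transparent about \emph{why} the two sides have the same size. The paper itself remarks that the result follows from \cite{CesnaviciusFlat} once one grants Lemma~\ref{rem:groupcohom}, which is essentially the spirit of your approach.
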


\begin{proof}
This well-known fact follows from \cite[Prop.\ 2.7(d)]{CesnaviciusFlat} if we grant 
Lemma~\ref{rem:groupcohom}, but we will give a more geometric proof in 
the spirit of this paper. The classes in $\Z_\ell^*/\Z_\ell^{*p}$ 
represent $\mu_p$-covers $X \to J$ which are trivialized by an unramified 
field extension, hence the corresponding $A_D$-torsor $X$ 
is also trivialized by an unramified field extension.  Since $A_D$ 
has good reduction at $\ell$, the torsor $X$ has a N\'eron model 
$\mathcal{X}$ over $\Z_\ell$, which is a torsor for the N\'eron 
model $\mathcal{A}$ of $A_D$ \cite[6.5. Cor.\ 4]{blr:neronmodels}. Since any torsor for a smooth proper group scheme over $\Z_\ell$ has a $\Z_\ell$-point, it follows that such classes are in the image 
of $\partial^D$ by Lemma \ref{lem:rationalpoint}.  Conversely, any 
element $r$ in the image of $\partial^D$ corresponds to a $\mu_p$-cover 
(and $A_D$-torsor) $X$ which is abstractly isomorphic to $A_D$, and 
hence has good reduction over $\Q_\ell$. By the N\'eron mapping 
property, $X$ extends to a $\mu_p$-cover and even an $\mathcal{A}$-torsor 
over $\Z_\ell$. It follows that $r \in \Z_\ell^*$, since we can 
interpret this scalar as an automorphism of a line bundle on an 
abelian scheme $\mathcal{J}$ over $\Z_\ell$ (well-defined up 
to $p$-th powers).          
\end{proof}

We will also consider more general Selmer groups. Given a subset 
$H = \{D_1, \ldots, D_m\} \subset J[p](F)$ of linearly independent 
elements, we can define an analogous Selmer group 
$\Sel(A_H) \subset \prod_{i = 1}^m F^*/F^{*p}$ which sits in an exact sequence
\[0 \to J(\Q)/\psi_H(A_H(\Q)) \to \Sel(A_H) \to \Sha(A_H)[\psi_H] \to 0\]
and which is isomorphic to the usual Selmer group $\Sel_{\psi_H}(A_H)$.  

\section{Jacobians of curves of the form $y^p = x(x-e_1)(x-e_2)$}
\label{sec:jacobians}

\subsection{A special family of curves}
Let $p > 5$ be a prime and let $e_0, e_1,e_2$ be distinct integers.  
Let $C$ be the smooth projective curve over $\Q$ with affine model
\begin{equation}\label{eq:generalC}
y^p = (x-e_0)(x-e_1)(x-e_2).
\end{equation}
There is no loss in generality in assuming $e_0 = 0$ so we will do so.  
The affine model is itself smooth, and its complement in $C$ is a 
single rational point we call $\infty$. The  genus of $C$ is $g = p-1$.  For more details on such curves see \cite{poonenschaefer}.

Let $J$ be the Jacobian of~$C$. Note
that $J(\Q)$ has $p$-torsion of rank at least ~$2$, generated
by the three divisor classes $D_i = [(e_i,0) - \infty]$, 
for $i \in \{0,1,2\}$.  The equality of divisors 
$D_0 + D_1 + D_2 = \mathrm{div}(y)$ means that $D_0 + D_1 + D_2 = 0$ 
in $J$. Let $D = D_0 + D_1$ and define the abelian varieties 
\begin{equation}\label{eq:Adefn}
\Ahat = J/\langle D_0, D_1 \rangle
\end{equation}
\begin{equation}\label{eq:Bdefn}
\Bhat = J/\langle D \rangle
\end{equation}
and the corresponding quotient isogenies $\phihat : J \rightarrow \Ahat$ 
and $\psihat \colon J \to \Bhat$.  As before, we identify $J$ 
with its dual via the canonical principal polarization, so that 
we may write $\phi : A \to J$ and 
$\psi \colon B \to J$ for the dual isogenies.  We also 
define $A_{D_i}$ to be the dual of $J/\langle D_i\rangle$ for $i = 0,1,2$ with 
isogenies $\psi_i \colon A_{D_i} \to J$. We have $B \simeq A_{D_2}$ 
since $D = -D_2$.
\par
Let $H = \langle D_0, D_1\rangle$, and define the map
\begin{equation}\label{eq:muphihatdefn}
\begin{split}
\partial^H :
&J(\Q)/\phi\bigl( A(\Q) \bigr) \longrightarrow
\qmp \times \qmp,\\
&\left[ \sum_{j=1}^g (x_j,y_j) - g \cdot \infty \right] \mapsto
\Bigl(
\prod_{j=1}^g x_j,\
\prod_{j=1}^g (x_j - e_1)
\Bigr).
\end{split}
\end{equation}
as in Lemmas \ref{lem:function1} and \ref{lem:function2} 
of Section \ref{subsec:mupjacobians}.  
In the above definition, each $x_j,y_j \in \overline\Q$, the divisor
$\sum_{j=1}^g (x_j,y_j) - g \cdot \infty$ is Galois stable,
and the left hand side is its divisor class. That such representatives 
exist follows from the fact that $C(\Q) \neq \emptyset$ 
\cite[Prop.\ 2.7]{schaefer:sel}.  The above description of $\partial^H$
applies whenever it makes sense, that is, when all~$x_j$ and $x_j - e_1$ 
are nonzero. Every class in $J(\Q)/\phi\bigl( A(\Q) \bigr)$ can 
be represented by such a divisor \cite[pg.166]{lang:abelianvarieties}.
\par
For $i \in \{1,2,3\}$, we have similar homomorphisms
\begin{equation}\label{eq:mupsihatdefnDi}
\begin{split}
\partial^{D_i} :&J(\Q)/\psi_i\bigl( A_{D_i}(\Q) \bigr) \longrightarrow
 \qmp,\\
&\left[ \sum_{j=1}^g (x_j,y_j) - g \cdot \infty \right] \mapsto
\prod_{j=1}^g \Bigl( x_j - e_i \Bigr),
\end{split}
\end{equation}
as well as the homomorphism
\begin{equation}\label{eq:mupsihatdefn}
\begin{split}
\partial^{D} :&J(\Q)/\psi\bigl( B(\Q) \bigr) \longrightarrow
 \qmp,\\
&\left[ \sum_{j=1}^g (x_j,y_j) - g \cdot \infty \right] \mapsto
\prod_{j=1}^g \Bigl( x_j(x_j - e_1) \Bigr).
\end{split}
\end{equation}
As before, the description of these maps is for representative divisors 
for which it makes sense.  However, note that by the equation for the curve, 
we have $\partial^{D_0} \cdot  \partial^{D_1} \cdot \partial^{D_2} = 1$. 
This allows us to describe the maps $\partial^{D_i}$ even on points 
where the formula above is not well-defined. For example:

\begin{lemma}\label{lem:torsionevaluation}
We have \[\partial^H(\left[(0,0)-\infty\right]) = [ e_1^{-1}e_2^{-1}, -e_1 ]\]
and
\[\partial^H( 
\left[(e_1,0)-\infty\right]) = [e_1, e_1^{-1}(e_1 - e_2)^{-1} ].\]
\end{lemma}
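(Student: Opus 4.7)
The plan is to exploit the product relation $\partial^{D_0}(P) \cdot \partial^{D_1}(P) \cdot \partial^{D_2}(P) = 1$ in $\Q^*/\Q^{*p}$, valid for all $P \in J(\Q)$: on the Zariski-dense locus of classes representable by $\sum(x_j,y_j) - g\infty$ with $x_j \neq 0,e_1,e_2$, the product equals $\prod y_j^p$, which is manifestly a $p$-th power, and both sides are homomorphisms. This relation reduces the computation of $\partial^{D_i}(D_i)$ to that of $\partial^{D_j}(D_i)$ for $j \neq i$, where direct evaluation is available.

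The technical heart is a mild extension of the formula \eqref{eq:mupsihatdefnDi} to divisor representatives whose support meets that of $\mathrm{div}(\tilde f_i)$, where $\tilde f_i = x - e_i$. For a degree-$0$ divisor $D = \sum_P n_P P$ on $C$, I would set
\[
\partial^{D_i}(D) \equiv \prod_P \mathrm{lc}_P(\tilde f_i)^{n_P} \pmod{\Q^{*p}},
\]
with $\mathrm{lc}_P(\tilde f_i) := \bigl(\tilde f_i \cdot t_P^{-\mathrm{ord}_P(\tilde f_i)}\bigr)(P)$ the leading coefficient with respect to a local parameter $t_P$. Since $\mathrm{ord}_P(\tilde f_i)$ is a multiple of $p$ at every $P$ in the support of $\mathrm{div}(\tilde f_i)$, changing $t_P$ alters $\mathrm{lc}_P(\tilde f_i)$ only by a $p$-th power, so the rule is well defined modulo $\Q^{*p}$. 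At $\infty$, I would use the uniformizer $t_\infty = y^b x^a$, where integers $a,b$ are chosen with $pa + 3b = -1$ (possible since $\gcd(p,3) = 1$): using $y^p \sim x^3$ at $\infty$ one checks that $\mathrm{lc}_\infty(x) = 1$ exactly, so $\mathrm{lc}_\infty(\tilde f_i) = 1$ for every $i$. That the extended map descends to $J(\Q)$ and agrees with \eqref{eq:mupsihatdefnDi} on its original domain follows from Weil reciprocity, since replacing $D$ by $D + \mathrm{div}(u)$ changes the product by $u$ evaluated on $\mathrm{div}(\tilde f_i) = pD_i$, which is a $p$-th power.

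With the extension in place, the direct evaluations are immediate. For $D_0 = (0,0) - \infty$, the only nontrivial contributions to $\partial^{D_j}(D_0)$ with $j \in \{1,2\}$ come from $(0,0)$: $\partial^{D_1}(D_0) = \tilde f_1(0,0) = -e_1$ and $\partial^{D_2}(D_0) = -e_2$. For $D_1 = (e_1,0) - \infty$, the contributions come from $(e_1,0)$: $\partial^{D_0}(D_1) = e_1$ and $\partial^{D_2}(D_1) = e_1 - e_2$. The product relation now yields
\[
\partial^{D_0}(D_0) = \bigl((-e_1)(-e_2)\bigr)^{-1} = e_1^{-1}e_2^{-1}, \qquad \partial^{D_1}(D_1) = \bigl(e_1(e_1-e_2)\bigr)^{-1} = e_1^{-1}(e_1-e_2)^{-1},
\]
and assembling the components of $\partial^H = (\partial^{D_0}, \partial^{D_1})$ produces the two claimed formulas.

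The main obstacle is the justification of the extended evaluation: one must show that it is well defined on $J(\Q)$ and agrees with \eqref{eq:mupsihatdefnDi} where both apply. Once this is in hand, the rest of the lemma is a short local computation at the Weierstrass points, using only that $\tilde f_j$ is regular and nonzero at $(e_i,0)$ for $j \neq i$, with value $e_i - e_j$.
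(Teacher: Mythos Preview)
Your proposal is correct and follows exactly the approach the paper indicates: the paper states the lemma immediately after observing that $\partial^{D_0}\cdot\partial^{D_1}\cdot\partial^{D_2}=1$ ``allows us to describe the maps $\partial^{D_i}$ even on points where the formula above is not well-defined,'' and offers no further proof. You have supplied the details the paper omits---the leading-coefficient extension, the check that $\mathrm{lc}_\infty(\tilde f_i)=1$, and the Weil-reciprocity justification that the extension descends to $J(\Q)$---but the underlying strategy is identical.
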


In the next section, we will make critical use of the following commutative diagram
\begin{equation}\label{eq:diagrammuphihat}
\begin{array}{ccc}
J(\Q)/\phi\bigl( A(\Q) \bigr)
&\overset{\partial^H}{\longrightarrow}&\qmp\times \qmp\\
\downarrow & &\downarrow \\
J(\Q)/\psi\bigl(B(\Q)\bigr)&\overset{\partial^D}
{\longrightarrow}&\qmp,\\
\end{array}
\end{equation}
whose right vertical map is $[r_1,r_2] \mapsto r_1r_2$.  

\subsection{Models}\label{subsec:models}
There are simple birational models for the $\mu_p$-covers of $J$ 
with a given Steinitz class. For concreteness, assume that the Steinitz 
class is $D = D_0 + D_1$.  The distinguished $\mu_p$-cover with this 
Steinitz class is the cover $B \to J$.  A birational model for $J$ is given by the equations
\[y_i^p = x_i(x_i - e_1)(x_i - e_2)\]
for $i = 1,\ldots, g$, modulo the action of $S_g$.
By 
Lemmas~\ref{lem:function1},~\ref{lem:distinguishedfunction}, 
and~\ref{lem:function2}, a birational model for $B$ is 
given by the same  $g$ equations above along with the additional equation
\[z^p = \prod_{i = 1}^g x_i(x_i - e_1),\]
all modulo the action of $S_g$.
Similarly, if $r \in \Q^\times$, then the $\mu_p$-cover corresponding 
to $(\LL, r\eta)$ is given by the same equations, with the last one 
twisted by $r$:
\[y_i^p = x_i(x_i - e_1)(x_i - e_2)\]
for $i = 1,\ldots, g$ and 
\[rz^p = \prod_{i = 1}^g x_i(x_i - e_1), \]
again modulo the action of $S_g$.

\section{Arbitrarily large $p$-torsion part of the Tate-Shafarevich group}
\label{sec:arblarge}
We wish to produce examples of arbitrarily large $p$-torsion subgroups in the
Tate-Shafarevich group $\Sha(B)[p]$,
by finding elements of $\Sel(B)$ which
can be shown to violate the Hasse principle by
using $\Sel(A)$. We will choose fairly generic curves of the form 
$y^p = x(x-e_1)(x-e_2)$, but then we will twist them in a carefully 
chosen way to produce our examples.  
\par

Let 
\[C = C_{u,v} : y^p = x(x-3u)(x-9v),\] where $p > 5$ is prime
and where $u,v \in \Z$ are not divisible by~$3$. Let $J$ be the 
Jacobian of $C$ and let $A$ be the isogenous abelian variety, as 
in the previous section.
\begin{lemma}\label{lem:orderJQpip}
Suppose $q$ is a prime such that $q \equiv 1\pmod p$. 
Then $J(\Q_q)/\phi\bigl(A(\Q_q)\bigr)$ 
has order~$p^2$.

\end{lemma}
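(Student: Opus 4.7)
The plan is to identify the kernel of $\phi$ explicitly as a group scheme, then invoke a standard local index formula that equates the order of the cokernel on $\Q_q$-points with the order of the local kernel.

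First I would note that $\phi\colon A\to J$ is by construction dual to the quotient isogeny $\widehat{\phi}\colon J\to J/\langle D_0,D_1\rangle$, whose kernel $\langle D_0,D_1\rangle\simeq (\Z/p\Z)^2$ is an \'etale constant group scheme over $\Q$. Taking Cartier duals, $\ker(\phi)\simeq \mu_p\times\mu_p$ as group schemes over $\Q$, and in particular $\deg\phi=p^2$.

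Second, the hypothesis $q\equiv 1\pmod p$ (together with $p\geq 5$ and $q$ prime, which force $q>p$, hence $q\neq p$) implies that $\mu_p$ is constant over $\Q_q$ and $\mu_p(\Q_q)\simeq \Z/p\Z$. Therefore
\[
A(\Q_q)[\phi]=(\mu_p\times\mu_p)(\Q_q)\simeq (\Z/p\Z)^2,
\]
which has order exactly $p^2$.

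Third, I would apply the standard local index formula: for an isogeny $\phi\colon A\to J$ of abelian varieties with good reduction over a local field $\Q_q$ whose residue characteristic is coprime to $\deg\phi$, one has
\[
\bigl|J(\Q_q)/\phi(A(\Q_q))\bigr|=\bigl|A(\Q_q)[\phi]\bigr|.
\]
The way I would establish this is via Mattuck's structure theorem: both $A(\Q_q)$ and $J(\Q_q)$ sit in exact sequences with a pro-$q$ formal-group part (isomorphic to $\Z_q^{g}$) and a finite component/torsion quotient. Since $\deg\phi=p^2$ is a unit in $\Z_q$, the restriction of $\phi$ to the pro-$q$ parts is an isomorphism, so the cokernel of $\phi$ on $\Q_q$-points agrees with the cokernel on the finite quotients; a snake-lemma calculation on the short exact sequence $0\to A(\Q_q)[\phi]\to A(\Q_q)\to J(\Q_q)\to J(\Q_q)/\phi(A(\Q_q))\to 0$ then yields equality of orders (the possible Tamagawa contribution being trivial when $J$ and $A$ have good reduction at $q$, which is the setting in which the lemma will be applied, namely $q=p_i\notin U$). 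Combining the three steps gives $|J(\Q_q)/\phi(A(\Q_q))|=p^2$.

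The main obstacle is the local index formula: a direct attack via the injection $\partial^H$ of \eqref{eq:muphihatdefn} only gives the weak bound $|J(\Q_q)/\phi(A(\Q_q))|\leq |(\Q_q^*/\Q_q^{*p})^2|=p^4$, so the structural input from Mattuck and the snake lemma is really what drives the argument.
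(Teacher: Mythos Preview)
Your identification $\ker\phi\simeq\mu_p\times\mu_p$ and the computation $\#A(\Q_q)[\phi]=p^2$ are fine, and the Mattuck/snake-lemma argument does yield the result \emph{assuming good reduction at $q$}. The gap is that this hypothesis is not part of the lemma, and it fails precisely in the application you have in mind. In Proposition~\ref{prop:diff} the lemma is invoked for $J_k$ (that is, for the curve $C_{uk,vk}$) at $q=p_i$; since $p_i\mid k$, the class of $k^3$ in $\Q_{p_i}^*/\Q_{p_i}^{*p}$ is ramified (its valuation $3$ is nonzero mod~$p$), so the $\mu_p$-twist $J_k$ of $J$ has ramified $\ell$-adic Tate module and hence genuinely bad reduction at $p_i$. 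Your parenthetical ``$q=p_i\notin U$, so good reduction'' conflates the bad primes of $C_{u,v}$ with those of its twist $C_{uk,vk}$; the latter set contains every $p_i$.

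The paper handles this without any reduction hypothesis by exploiting the extra endomorphisms that appear once $\zeta_p\in\Q_q$: the curve automorphism $(x,y)\mapsto(x,\zeta_p y)$ embeds $\Z[\zeta_p]$ into $\mathrm{End}(J_{\Q_q})$, and one checks that $\ker\widehat\phi=\langle D_0,D_1\rangle=J[1-\zeta_p]$, so $\widehat\phi$ coincides with the endomorphism $1-\zeta_p$ up to an automorphism. This forces $A\simeq J$ as abelian varieties over $\Q_q$, whence $c_q(A)=c_q(J)$ regardless of the reduction type. Plugging this into the general local index formula $\#\bigl(J(\Q_q)/\phi(A(\Q_q))\bigr)=\#A(\Q_q)[\phi]\cdot c_q(J)/c_q(A)$ (valid for $q\neq p$) gives $p^2$. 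The missing idea in your approach is exactly this CM identification of $A$ with $J$ over $\Q_q$; without it you would have to compute the Tamagawa ratio at a bad prime directly, which is considerably harder.
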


\begin{proof} The congruence condition on $q$ implies $\Q_q^*$ contains 
a primitive $p$-th root of unity $\zeta$.  Over any field containing 
$\zeta$, the automorphism $(x,y) \mapsto (x, \zeta y)$ of $C$ induces 
a ring embedding $\iota \colon \Z[\zeta] \hookrightarrow \mathrm{End}(J)$. 
The degree of $\iota(\alpha)$ is equal 
to $\mathrm{Nm}(\alpha)^2 = \#(\Z[\zeta]/\alpha)^2$.  Indeed, the degree function restricted to $\Z[\zeta]$ is a power of the norm \cite[\S 19]{mumford:abelianvarieties}, and we have $\deg([n]) = n^{2g} = n^{2[\Q(\zeta) \colon \Q]}$, so it is the square of the norm in this case.   
The kernel of $\phihat \colon J \to \Ahat$ is then equal to the kernel 
of the endomorphism $1 - \iota(\zeta)$.  It follows that $\phihat$ 
agrees with $1 - \iota(\zeta)$ up to post-composition with an 
automorphism (since they have the same degree and one factors through the other), hence the abelian varieties $A$ and $\widehat{A}$ are 
isomorphic to $J$ (over any field containing $\zeta$, and in particular 
over $\Q_{q}$).  On the other hand, we have \cite[Cor.\ 3.2]{shnidman:quadtwists}
\[\dfrac{\#J( \Q_{q} )/ \phi\bigl(A(\Q_{q})\bigr)}{\#A( \Q_{q} )[\phi]} 
= c_{q}(J)/c_{q}(A),\]
where the right hand side is the ratio of Tamagawa numbers over $\Q_{q}$.  
Since $J \simeq A$ over $\Q_{q}$, this ratio is 1. We also 
have $\#A(\Q_{q})[\phi] = p^2$, which shows that 
$\#J( \Q_{q} )/ \phi\bigl(A(\Q_{q})\bigr) = p^2$.  
\end{proof}

For each integer $k$, we will consider the curve 
\[C_k = C_{u,v,k} \colon y^p = x(x-3uk)(x-9vk).\]
Another model for $C_k$ is $k^{-3} y^p = x(x-3u)(x-9v)$, 
which shows that $C_k$ is 
a $\mu_p$-twist of the original curve $C = C_{u,v}$.  Let $J_k$, $A_k$, 
and $B_k$ be the corresponding abelian varieties for the curve $C_k$. 
These are $\mu_p$-twists of $J,A,$ and $B$ respectively. 

For two primes $q$ and $\ell$, set $\left(\frac{q}{\ell}\right)_p = 1$ 
if and only if $q$ is a $p$-th power in $\Q_\ell^\times$.  Recall the exact sequence
\[0 \to J_k(\Q)/\psi(B_k(\Q)) 
\stackrel{\partial^D}{\longrightarrow} \Sel(B_k) \to \Sha(B_k)[\psi] \to 0\]
from Proposition \ref{prop:fundexactseq}.  
\begin{proposition}\label{prop:diff}
Let~$U$ be the set of primes
that divide $3puv(u-3v)$. 
Suppose $k$ is a product of distinct primes 
$k = p_1 p_2 \cdot \ldots \cdot p_t$, where $t\geq 2$ and each prime~$p_i$ 
is not in $U$, 
and satisfies
\begin{enumerate}
\item $\left(\frac{p_i}{p_j}\right)_p = 1$, 
for all $i \neq j$ in $\{1,\ldots, t\}$,
\item $\left(\frac{p_i}{q}\right)_p = 1$, 
for all $i \in \{1, \ldots, t\}$ and all $q \in U$,
\item $\left(\frac{q}{p_i}\right)_p = 1$, 
for all $i \in \{1, \ldots, t\}$ and all $q \in U\backslash \{3\}$,
\item $\left(\frac{3}{p_i}\right)_p \neq 1$, for all $i \in \{1, \ldots, t\}$. 
\end{enumerate}
Then, for all~$i$, we have $p_i \in \Sel(B_k)$ but $p_i \notin \partial^D(J_k(\Q))$.
More generally, if $q = \prod_{i \in I} p_i^{a_i}$, 
where $I$ is any non-empty proper subset 
of $ \{1,\ldots, t\}$ and $1 \leq a_i \leq p-1$, 
then $q \in \Sel(B_k)$ but $q \notin \partial^D(J_k(\Q))$.
\end{proposition}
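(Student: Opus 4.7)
The plan is to verify $q \in \Sel(B_k)$ prime-by-prime, then obstruct $q \in \partial^D(J_k(\Q))$ by lifting through the diagram~\eqref{eq:diagrammuphihat} to a class $(r_1,r_2)$ under $\partial^H$, and imposing at each $p_i$ the local image constraint coming from the rational torsion $D_0, D_1 \in J_k(\Q)$.

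For local solubility there are four cases. If $\ell \notin U \cup \{p_1,\ldots,p_t\}$ then $J_k$ has good reduction and $q$ is a unit, so by Proposition~\ref{prop:unramified} the class $q \in \Z_\ell^*/\Z_\ell^{*p}$ is in the image. If $\ell \in U$, condition~(2) makes $q$ trivial in $\Q_\ell^*/\Q_\ell^{*p}$; for $\ell = p_j$ with $j \notin I$, condition~(1) does the same. The subtle case $\ell = p_i$ with $i \in I$ (where $v_{p_i}(q) \not\equiv 0 \pmod p$) is handled by evaluating $\partial^D$ on $D_0, D_1$ via Lemma~\ref{lem:torsionevaluation}, discarding every prime factor other than $3$ and $p_i$ modulo $p$-th powers using~(1) and~(3), and using~(4) to ensure $3$ is a non-$p$-th-power generator of the units. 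One finds $\partial^D([D_0]) \equiv 3^{-2}p_i^{-1}$ and $\partial^D([D_0-D_1]) \equiv 3^{-1}$, which are $\FF_p$-linearly independent in $\Q_{p_i}^*/\Q_{p_i}^{*p}$, so the image is everything, and in particular contains $p_i^{a_i}$.

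For the non-membership, suppose $\partial^D([P]) = q$ for some $P \in J_k(\Q)$. The vertical surjection in~\eqref{eq:diagrammuphihat} supplies a lift $[P'] \in J_k(\Q)/\phi(A_k(\Q))$ with $\partial^H([P']) = (r_1,r_2)$ and $r_1 r_2 \equiv q$ mod $p$-th powers. Proposition~\ref{prop:unramified} applied coordinatewise confines the supports of $r_1, r_2$ modulo $p$-th powers to $U \cup \{p_1,\ldots,p_t\}$. Writing $r_i = 3^{\alpha_i} w_i \prod_j p_j^{s_j^{(i)}}$ with $w_i$ supported on $U \setminus \{3\}$, the identity $r_1 r_2 \equiv q$ gives $\alpha_1 + \alpha_2 \equiv 0$, $w_1 w_2 \equiv 1$, and $s_j^{(1)} + s_j^{(2)} \equiv v_{p_j}(q) \pmod p$.

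The main obstacle and key input is the local image at each $p_i$. Lemma~\ref{lem:orderJQpip} applied to the twist $J_k$ gives $|J_k(\Q_{p_i})/\phi(A_k(\Q_{p_i}))| = p^2$, so by injectivity of $\partial^H$ the local image there equals $\langle \partial^H([D_0]), \partial^H([D_1]) \rangle$. Evaluating these by Lemma~\ref{lem:torsionevaluation} and reducing via (1), (3), (4), only $3$ and $p_i$ survive modulo $p$-th powers, yielding
\[
\partial^H([D_0]) \equiv (3^{-3}p_i^{-2},\, 3\, p_i), \qquad \partial^H([D_1]) \equiv (3\, p_i,\, 3^{-2}p_i^{-2}).
\]
Requiring $(\alpha_1, s_i^{(1)}, \alpha_2, s_i^{(2)})$ to lie in this $\FF_p$-span yields the two relations $s_i^{(2)} \equiv \alpha_2$ and $5\, s_i^{(1)} \equiv 3\alpha_1 - \alpha_2 \pmod p$. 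For $p \neq 5$ the second relation determines $s_i^{(1)}$ purely in terms of the global quantities $\alpha_1, \alpha_2$ (independently of $i$), so $a_i = s_i^{(1)} + s_i^{(2)}$ is the same element of $\FF_p$ for every $i \in \{1,\ldots,t\}$; but $I$ being both nonempty and proper forces $a_i$ to vanish for some $j \notin I$ and be nonzero for some $i \in I$, a contradiction. The degenerate case $p = 5$, where the determinant $5$ vanishes, is handled similarly after supplementing the above with the analogous local constraint at the `gluing prime' $\ell = 3$.
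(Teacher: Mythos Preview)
Your argument for local solubility and, for $p\neq 5$, for global non-membership is essentially the paper's proof reorganized as straight linear algebra. Both proofs pin down the local image of $\partial^H$ at each $p_i$ as the $\FF_p$-span of $(-3,-2,1,1)$ and $(1,1,-2,-2)$ (in the $(3,p_i)$-basis), and both exploit that the global $3$-exponents $(\alpha_1,\alpha_2)$ are visible at every $p_i$. Your relation $5s_i^{(1)}=3\alpha_1-\alpha_2$ is exactly the content of the paper's computation $r_1^3r_2^4\equiv p_i^{3m}$, and your conclusion ``$s_i^{(1)}+s_i^{(2)}$ is the same for all $i$'' is the paper's ``no $p_j$ appears in $r_1r_2$'' (in its Case~2). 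So for $p\neq 5$ the two arguments coincide.

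The genuine gap is your handling of $p=5$. Your proposed fix, ``the analogous local constraint at $\ell=3$,'' does not supply new information. Since $p\neq 3$, every unit in $\Z_3$ is a $p$-th power, so $\Q_3^*/\Q_3^{*p}$ is one-dimensional, generated by $3$; the only data visible there about $(r_1,r_2)$ is $(\alpha_1,\alpha_2)$. The images of $D_0,D_1$ at $\ell=3$ are $(-3,1)$ and $(1,-2)$, whose determinant is again $5$; for $p=5$ they span only the line through $(2,1)$, i.e.\ they give exactly the relation $\alpha_2\equiv 3\alpha_1$ you already extracted at each $p_i$. Even if the full local image at $3$ were known to be this line, you would still have $s_i^{(1)}$ unconstrained, so the contradiction does not follow. (Note, incidentally, that the paper's own argument has the same $p=5$ lacuna: the inference ``$3\alpha_1+4\alpha_2=0\Rightarrow b_j=-a_j$'' is really ``$-5(a_j+b_j)=0\Rightarrow a_j+b_j=0$,'' which fails in $\FF_5$.) To close the case $p=5$ one would need a genuinely different local input, not the one at $\ell=3$.
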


Note that condition $(4)$ implies that $p_i \equiv 1\pmod{p}$ for all $i$. 

\begin{proof} By Lemma \ref{lem:torsionevaluation}, we have 
\[\partial^H([(0,0)-\infty]) = [ 3^{-3} u^{-1} v^{-1} k^{-2}, -3 u k ]\]
and
\[\partial^H( 
[(3uk,0)-\infty]) = [ 3uk, 3^{-2}u^{-1}(u-3v)^{-1}k^{-2} ].\]
For $i = 1,\ldots, t$, the images of these two elements in
$\Q_{p_i}^*/(\Q_{p_i}^*)^p \times \Q_{p_i}^*/(\Q_{p_i}^*)^p$ are 
$[3^{-3}p_i^{-2},3p_i]$ and $[3p_i,3^{-2}p_i^{-2}]$ respectively, 
by our assumptions on the $p_i$.  These two elements are linearly 
independent, hence generate all of 
$\partial^H\bigl( J_k(\Q_{p_i}) /\phi(A_k(\Q_{p_i})) \bigr)$,
by Lemma~\ref{lem:orderJQpip}. 
\par
Let $[r_1,r_2]$ be in 
$\partial^H\bigl( J_k(\Q)/\phi(A_k(\Q)) \bigr)$.
We shall consider elements of $\Q^*/\Q^{*p}$ as $p$th-power-free
integers.  Note that by Proposition \ref{prop:unramified}, the 
integers $r_1$ and $r_2$ can only be divisible by primes in the 
set $\{p_1, \ldots, p_t\} \cup U$. 

If there is no~$i$ such that $\mathrm{ord}_{p_i}(r_1) \equiv \mathrm{ord}_{p_i}(1/r_2) \pmod p$, then $\mathrm{ord}_{p_i}(r_1r_2) \not\equiv 0\pmod{p}$ for every~$p_i$, and so $r_1 r_2$
cannot be of the form $\prod_{i \in I} p_i^{a_i}$, for a non-empty 
proper subset of indices $I$, and any tuple of exponents~$a_i$ 
with $1 \leq a_i \leq p-1$.
\par
Suppose there exists~$i$ such that  $\mathrm{ord}_{p_i}(r_1) \equiv \mathrm{ord}_{p_i}(1/r_2) \pmod p$.  Since 
$[3^{-3}p_i^{-2},3p_i]$ and $[3p_i,3^{-2}p_i^{-2}]$
generate all of
$\partial^H\bigl( J_k(\Q_{p_i})/
\phi(A_k(\Q_{p_i})) \bigr)$, considering only the $p_i$-adic valuation, 
we have 
\[[r_1,r_2] \equiv [p_i^{-2a},p_i^a] \cdot [p_i^b,p_i^{-2b}] 
\equiv [p_i^{b-2a}, p_i^{a-2b}]\]
for integers $a$ and $b$. So 
\[r_1r_2 \equiv p_i^{b-2a + a - 2b} = p_i^{-b-a}\]
and we must have $b \equiv -a$ (mod~$p_i$). 
In other words, the image of~$[r_1,r_2]$
in $\Q_{p_i}^*/(\Q_{p_i}^*)^p \times \Q_{p_i}^*/(\Q_{p_i}^*)^p$
must be a power of the quotient of these generators.  Thus, 
in $\Q_{p_i}^*/(\Q_{p_i}^*)^p \times \Q_{p_i}^*/(\Q_{p_i}^*)^p$ we have 
\[[r_1,r_2] = [ 3^{-4} p_i^{-3}, 3^3 p_i^3 ]^m 
= [3^{-4m} p_i^{-3m},3^{3m} p_i^{3m}],\]
for some $0 \leq m < p$.  Thus in $\Q_{p_i}^*/(\Q_{p_i}^*)^p$ we have
\[r_1^3r_2^4 = 3^{-12m + 12m} p_i^{-9m+12m} =p_i^{3m}.\]
Since $3$ is not a $p$-th power modulo $p_i$, this implies that~$3$ 
divides~$r_1^3$ and~$1/r_2^4$ to the same power.
Since, for any~$j$, the elements $[ 3^{-3} p_j^{-2}, 3 p_j ]$
and $[ 3p_j, 3^{-2}p_j^{-2} ]$ generate all of
$\partial^H(( J_k(\Q_{p_j})/\phi(A_k(\Q_{p_j}))))$,
and since we have already shown that~$3$ divides~$r_1^3$ 
and~$1/r_2^4$ to the same power, we deduce that the image of~$[r_1,r_2]$
in $\Q_{p_j}^*/(\Q_{p_j}^*)^p \times \Q_{p_j}^*/(\Q_{p_j}^*)^p$
must be a power of the quotient of these generators, and so~$p_j$ 
must divide~$r_1$ and~$1/r_2$ to the same power. This applies for all~$j$,
hence no~$p_j$ will appear in $r_1 r_2$.  So again we
have that~$r_1 r_2$ cannot be of the form $\prod_{i \in I} p_i^{a_i}$ 
with $0 < \#I < t$.
\par
Since $\partial^H \bigl( J_k(\Q)/\phi(A_k(\Q)) \bigr)$ 
maps surjectively onto 
$\partial^D \bigl( J_k(\Q)/\psi(B_k(\Q)) \bigr)$, it follows 
that each element of the form $\prod_{i \in I} p_i^{a_i}$ 
with $0 < \#I < t$ and $0 < a_i < p$ is not in 
$\partial^D \bigl( J_k(\Q)/\psi(B_k(\Q)) \bigr)$.
\par
On the other hand, we show that each~$p_i$ is in 
$\partial^D ( J_k(\Q_\ell)/\psi(B_k(\Q_\ell)) )$
for every prime~$\ell$, as follows. First note that for every
prime~$\ell \in U \cup \{p_1, \ldots, p_t\}$ except~$p_i$ itself,~$p_i$ 
is a $p$th power in $\Q_{\ell}^*$, and so $p_i$ is in 
$\partial^D\bigl( J_k(\Q_\ell)/\psi(B_k(\Q_\ell)) \bigr)$
by virtue of being $\partial^D$ of the identity. 
If $\ell \notin U \cup \{p_1, \ldots, p_t\}$, we have 
$p_i \in \partial^D ( J_k(\Q_\ell)/\psi(B_k(\Q_\ell)) )$, 
by Proposition~\ref{prop:unramified}. 
For the remaining case $\ell = p_i$, note that 
$\partial^D( [(0,0)-\infty] )= [ -3^{-2} v^{-1} k^{-1} ]$
and 
$\partial^D([(3uk,0)-\infty]) = [ 3^{-1} (u-3v)^{-1} k^{-1} ]$,
and so the first divided by the square of the second gives~$p_i$,
since all other factors are $p$th powers in $\Q_{p_i}$.
Hence~$p_i$ is in the image everywhere locally.
\end{proof}

As a corollary, we deduce the first theorem from the introduction.

\begin{proof}[Proof of Theorem $\ref{thm:explicittorsor}$]
The Theorem follows from Lemma \ref{lem:rationalpoint} and 
Proposition \ref{prop:diff}, together with the discussion of models 
in Section \ref{subsec:models}.
\end{proof}

As a second corollary, we have:

\begin{corollary}\label{cor:ppart}
Let $C_{u,v,k}$ be as in Proposition \ref{prop:diff}. 
Then $\#\Sha(B_{u,v,k})[p] \geq p^{t-1}$.
\end{corollary}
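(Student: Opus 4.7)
The plan is to apply the exact sequence from Proposition \ref{prop:fundexactseq}, namely
\[
0 \to J_k(\Q)/\psi\bigl(B_k(\Q)\bigr) \xrightarrow{\partial^D} \Sel(B_k) \to \Sha(B_k)[\psi] \to 0,
\]
and read off a lower bound for $\Sha(B_k)[\psi]$, which sits inside $\Sha(B_k)[p]$ because $\psi$ has degree $p$.

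First I would consider the subgroup $G \subset \Q^\times/\Q^{\times p}$ generated by $p_1, p_2, \ldots, p_{t-1}$. Since the $p_i$ are distinct primes, they are $\mathbb{F}_p$-linearly independent in $\Q^\times/\Q^{\times p}$, so $\#G = p^{t-1}$. Every nontrivial element of $G$ has the form $q = \prod_{i \in I} p_i^{a_i}$ with $I \subset \{1, \ldots, t-1\}$ a non-empty subset and exponents $1 \leq a_i \leq p-1$. In particular $I$ is a non-empty \emph{proper} subset of $\{1, \ldots, t\}$.

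Next, I would invoke Proposition \ref{prop:diff} directly: every such $q$ lies in $\Sel(B_k)$ and does \emph{not} lie in $\partial^D\bigl(J_k(\Q)\bigr)$. The first statement gives $G \subset \Sel(B_k)$, and the second gives $G \cap \partial^D\bigl(J_k(\Q)\bigr) = \{1\}$. Therefore the composition
\[
G \hookrightarrow \Sel(B_k) \twoheadrightarrow \Sel(B_k)/\partial^D\bigl(J_k(\Q)\bigr) \simeq \Sha(B_k)[\psi]
\]
is injective, producing $\#\Sha(B_k)[\psi] \geq \#G = p^{t-1}$. Since $\psi$ is a $p$-isogeny, $\Sha(B_k)[\psi] \subset \Sha(B_k)[p]$, yielding $\#\Sha(B_k)[p] \geq p^{t-1}$.

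There is essentially no obstacle here: all the work has been done in Proposition \ref{prop:diff}, which simultaneously established local solubility (membership in $\Sel(B_k)$) and global non-triviality (being outside the image of $\partial^D$) for all of the relevant elements at once. The only point requiring a moment of care is ensuring that every nontrivial element of $G$ is covered by the hypothesis of Proposition \ref{prop:diff}, which is why I restrict to the subgroup generated by only $t-1$ of the primes $p_i$ rather than all $t$ of them; this guarantees that the index set $I$ is always a proper subset of $\{1, \ldots, t\}$.
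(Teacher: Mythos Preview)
Your proof is correct and follows essentially the same route as the paper: both use the exact sequence of Proposition~\ref{prop:fundexactseq} together with Proposition~\ref{prop:diff} to exhibit an $\FF_p$-subspace of $\Sel(B_k)$ of rank $t-1$ mapping injectively to $\Sha(B_k)[\psi]\subset\Sha(B_k)[p]$. The only difference is cosmetic: you take the subgroup $\langle p_1,\ldots,p_{t-1}\rangle$, so that every nontrivial element is automatically supported on a proper subset of $\{1,\ldots,t\}$, whereas the paper takes $\langle p_1,\ldots,p_t\rangle$ and then argues that its intersection with $\partial^D(J_k(\Q))$ is at most one-dimensional via a short scaling argument.
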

\begin{proof}  Let $B_k = B_{u,v,k}$. We have the exact sequence
\begin{equation}\label{eq:shainject}
0 \longrightarrow J_k(\Q)/\psi(B_k(\Q)) \longrightarrow \mathrm{Sel}(B_k) 
\longrightarrow \Sha(B_k)[\psi] 
  \longrightarrow 0.
\end{equation}
Moreover, we have seen that any element of the form 
$\prod_{i \in I} p_i^{a_i}$, for some proper subset 
$I \subset \{1,\ldots, t\}$ is contained in $\mathrm{Sel}(B_k)$ but does 
not lie in the subgroup $J_k(\Q)/\psi(B_k(\Q)$.  Thus,  these elements 
must map non-trivially to $\Sha(B_{k}/\Q)[\psi]$.  In fact, we see 
that the intersection of $J_k(\Q)/\psi(B_k(\Q))$ with the subgroup 
of $\mathrm{Sel}(B_k)$ generated by the elements $p_1, \ldots, p_t$ 
is at most one dimensional as an $\FF_p$-vector space.  Indeed, any 
two linearly independent elements in this subgroup can be scaled so 
that they are divisible exactly once by $p_1$, and hence their ratio 
is non-zero and not divisible by $p_1$, which would be a contradiction. 
It follows that the image of the subgroup 
$\langle p_1, \ldots, p_t\rangle$ in $\Sha(B_k)[\psi]$ has dimension 
at least $t - 1$. 
Since $\deg(\psi) = p$, we have $\Sha(B_k)[\psi] \subset \Sha(B_k)[p]$, 
which finishes the proof.
\end{proof}

\section{Proof of Theorem \ref{thm:mainthm}}\label{sec:proofofThm1}
To deduce Theorem \ref{thm:mainthm} from the results of the previous section, 
we need two extra ingredients. 

\begin{proposition}\label{prop:arbmany}
For any $u,v$  as above, and for any $t \geq 0$, there are primes 
$p_1,\dots, p_t$ satisfying the conditions of 
Proposition~$\ref{prop:diff}$.
\end{proposition}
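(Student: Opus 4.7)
My plan is to prove the proposition by induction on $t$. Assuming $p_1,\ldots,p_{t-1}$ have been constructed, I will use the Chebotarev density theorem to produce a suitable $p_t$. Let $K = \Q(\zeta_p)$, let $S = U \cup \{p_1,\ldots,p_{t-1}\}$, and let $N$ be a large integer divisible by $p^2$ and by each prime of $S\setminus\{p\}$ to a sufficient power. Consider the finite Galois extension
\[
M \;=\; K\bigl(\zeta_N,\;\sqrt[p]{q} : q \in S\bigr)
\]
of $\Q$. For any prime $\ell \nmid pN$, I will translate each of conditions (1)--(4) into a condition on the Frobenius class of $\ell$ in $\mathrm{Gal}(M/\Q)$: the conditions ``$q$ is (or is not) a $p$-th power in $\Q_\ell^\times$'' appearing in (3), (4), and the ``$p_j$ is a $p$-th power mod $p_t$'' half of (1) correspond to whether $\mathrm{Frob}_\ell$ fixes $\sqrt[p]{q}$; the conditions ``$\ell$ is a $p$-th power in $\Q_q^\times$'' appearing in (2) and the other half of (1) are congruence conditions on $\ell$ modulo $N$, hence Frobenius conditions in $\Q(\zeta_N)\subset M$. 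Together, the requirements single out a conjugation-stable subset $T \subset \mathrm{Gal}(M/\Q)$.

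The heart of the argument, and the main obstacle, will be to check that $T$ is non-empty, i.e.\ that the Kummer-type and cyclotomic-type constraints are compatible. Setting $L = K(\sqrt[p]{q} : q \in S)$, this reduces to two structural facts: (a) $\mathrm{Gal}(L/K) \cong (\Z/p)^{|S|}$, since distinct rational primes remain multiplicatively independent in $K^\times/K^{\times p}$; and (b) $L \cap \Q(\zeta_N) = K$, because $\mathrm{Gal}(K/\Q) \cong (\Z/p)^\times$ acts on $\mathrm{Gal}(L/K) \cong \mu_p^{|S|}$ through the cyclotomic character (nontrivial for $p>2$), so the maximal subfield of $L$ abelian over $\Q$ is $K$ itself. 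These give $\mathrm{Gal}(M/\Q) \cong \mathrm{Gal}(L/\Q)\times_{\mathrm{Gal}(K/\Q)}\mathrm{Gal}(\Q(\zeta_N)/\Q)$, whence Frobenius on the Kummer and cyclotomic factors may be prescribed independently, subject only to agreement on $K$. Condition (4) forces $p_t \equiv 1\pmod p$ (otherwise every nonzero residue mod $p_t$ is a $p$-th power), so both prescriptions must restrict to $\mathrm{id}_K$---automatically compatible.

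With $T \neq \emptyset$ confirmed, Chebotarev produces infinitely many primes $\ell$ with $\mathrm{Frob}_\ell \in T$; I choose one outside $S$ to serve as $p_t$ and the induction closes. The most delicate part of the verification will be ensuring that condition (2) at $q = p$, namely ``$p_t$ is a $p$-th power in $\Q_p^\times$,'' is correctly captured by congruences modulo $p^2$ in $\Q(\zeta_{p^2}) \subset \Q(\zeta_N)$; this is why $N$ must include the factor $p^2$. Everything else is bookkeeping.
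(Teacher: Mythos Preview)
Your proposal is correct and proceeds by the same overall architecture as the paper (induction on $t$, then Chebotarev), but the decomposition of the governing Galois extension and the linear-disjointness argument are genuinely different. The paper lumps the cyclotomic piece $\Q(\zeta_{pN})$ together with the Kummer extensions $\Q(\sqrt[p]{q})$ for $q \mid N$ (where $N$ omits $3$) into a single field $L$, and then adjoins $E = \Q(\sqrt[p]{3})$ separately; the key disjointness $E \cap L = \Q$ is proved by a ramification argument at $3$ (namely, $E/\Q$ is totally ramified at $3$ while $L/\Q$ is unramified there). One then simply seeks a prime $p_t$ whose Frobenius is trivial in $\Gal(L/\Q)$ but nontrivial in $\Gal(EL/\Q)$. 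By contrast, you keep $\sqrt[p]{3}$ with the other Kummer generators, separate off the cyclotomic field $\Q(\zeta_N)$, and prove $L \cap \Q(\zeta_N) = K$ via the fact that $\Gal(K/\Q)$ acts on $\Gal(L/K)$ through the (nontrivial) mod-$p$ cyclotomic character, so $L$ has no abelian subfield over $\Q$ beyond $K$. The paper's route is shorter and exploits the concrete arithmetic of the prime $3$; your route is more structural and would adapt without change if $3$ were replaced by any other prime in $U$, since it never uses anything special about $3$. Your handling of the condition at $q = p$ (requiring $p^2 \mid N$ so that ``$p_t$ a $p$-th power in $\Q_p^\times$'' becomes $p_t \equiv 1 \pmod{p^2}$, once $p_t \equiv 1 \pmod p$ is forced by (4)) matches the paper's treatment exactly.
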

\begin{proof}  Let $K = \Q(\zeta) = \Q(\mu_p)$. 
We prove this by induction on $t$.  If $t = 0$, then there is nothing to prove.  

Now let $t > 0$ and suppose we have found primes $p_1,\cdots, p_{t-1}$ 
satisfying the conditions. Let $k = p_1 \cdots p_{t-1}$. Let $N$ be 
the product of the primes dividing $puv(u-3v)k$, and let $\zeta_{pN}$ 
be a primitive $pN$-th root of unity.   Let $L$ be the compositum 
inside $\bar \Q$ of $\Q(\zeta_{pN})$ with all of the fields 
$\Q(\sqrt[p]{q})$, with $q$ a prime dividing $N$.  Then $L$ is an 
abelian extension of $K$ and a Galois extension of $\Q$. 
Finally, let $E = \Q(\sqrt[p]{3})$ and let $F =EL$ be the compositum 
of $E$ and $L$, which is again a Galois extension of $\Q$.  

Note that the fields $E$ and $L$ are linearly disjoint over $\Q$.  
Indeed, $E/\Q$ is totally ramified at $3$, whereas $L$ is unramified at $3$. 
Thus, we have an exact sequence 
\[0 \to (\Z/p\Z) \to \Gal(F/\Q) \to \Gal(L/\Q) \to 0\]
By the Cebotarev density theorem, there exists a prime $p_t$ (in fact, 
infinitely many such primes) whose Frobenius conjugacy class in 
$\Gal(F/\Q)$ is not trivial but restricts to the trivial class 
in $\Gal(L/\Q)$.  Let us check that $p_t$ satisfies all the 
desired properties.  

By construction, $p_t$ splits completely in any subfield of $L$.  
In particular, $p_t$ splits completely in $\Q(\zeta_{pN})$ and hence 
is a $p$-th power in $\Q_q^*$ for any prime $q$ dividing $3pN$.  
For $q \nmid 3p$, this is because $p_t \equiv 1\pmod q$, and hence 
$p_t$ is a $p$-th power in $\Q_q^*$ by Hensel's lemma.  For $q = p$, 
this is because $p^2 \mid pN$ and hence $p_t \equiv 1\pmod{p^2}$ and 
hence is a $p$-th power, again by Hensel's lemma.  For $q = 3$, this 
is because every unit in $\Z_3$ is a $p$-th power.  Similarly, $p_t$ 
splits completely in $\Q(\sqrt[p]{q})$ for all $q \mid N$, so all 
such primes $q$ are $p$-th powers modulo $p_t$.  

Finally, we check that 3 is not a $p$-th power in $\Q_{p_t}^*$.  
It is enough to show that the prime $p_t$ does not have a degree 1 prime 
above it in $E$. If it did, then because $\zeta \in \Q_{p_t}^*$, 
once the polynomial $x^p - 3$ has one root in $\Q_{p_t}^*$, it necessarily 
has all of its roots in $\Q_{p_t}^*$.  Therefore $p_t$ would split 
completely in $E$.  Since $p_t$ splits completely in $L$, this would 
mean that $p_t$ splits completely in $F = EL$. But by construction, 
the Frobenius at $p_t$ is non-trivial, so $p_t$ does not split completely.   
\end{proof}

\par
It remains to show that, for each prime~$p$, there exist
examples for which $J_{u,v,k}$ is absolutely simple, 
and hence $B_{u,v,k}$ as well.
\begin{lemma}\label{lem:existsimple}
For each $p > 5$, there exist $u,v \in \Z$ as in 
Proposition~$\ref{prop:diff}$ such that $B_{u,v,k}$ is absolutely simple
for all~$k$.
\end{lemma}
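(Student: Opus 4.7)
The plan is to reduce the assertion to absolute simplicity of $J_{u,v}$ and then apply a theorem of Masser on endomorphism algebras in one-parameter families. First, since $B_{u,v,k}$ is the quotient of $J_{u,v,k}$ by the cyclic $p$-group $\langle D\rangle$, the isogeny $\psi$ shows that $B_{u,v,k}$ and $J_{u,v,k}$ are geometrically isogenous; and $J_{u,v,k}$ is in turn geometrically isomorphic to $J_{u,v}$ because $C_{u,v,k}$ is the $\mu_p$-twist $k^3y^p=x(x-3u)(x-9v)$ of $C_{u,v}$. Absolute simplicity is preserved under isogeny and twist, so it suffices to find integers $u,v$ not divisible by~$3$ for which $J_{u,v}$ is absolutely simple; Proposition~\ref{prop:arbmany} will then supply primes $p_1,\dots,p_t$ whose product serves as~$k$.

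For the simplicity, I would first exhibit the family and its automatic endomorphisms. The automorphism $(x,y)\mapsto(x,\zeta y)$ of $C_{u,v}$, with $\zeta$ a primitive $p$-th root of unity, gives an embedding $\Z[\zeta]\hookrightarrow\mathrm{End}(J_{u,v,\bar{\Q}})$, so $\Q(\zeta)\subset\mathrm{End}(J_{u,v,\bar{\Q}})\otimes\Q$ for every $(u,v)$. After rescaling $x$, the geometric isomorphism class of $C_{u,v}$ depends only on the cross-ratio $t=v/u$, so $\{J_{u,v}\}$ constitutes a non-isotrivial one-parameter family of $g$-dimensional abelian varieties with $\Q(\zeta)$-action. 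Because $[\Q(\zeta):\Q]=g=\dim J_{u,v}$ and $\Q(\zeta)$ is a field with no nontrivial idempotents, any $J_{u,v}$ whose geometric endomorphism algebra equals $\Q(\zeta)$ exactly must be absolutely simple.

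The next step is to show that the generic member of the family has endomorphism algebra exactly $\Q(\zeta)$. I would attempt this either by a moduli dimension count --- the PEL Shimura locus of abelian varieties with $\Z[\zeta]$-action of the signature determined by our family is positive-dimensional, so any strict enlargement of $\Q(\zeta)$ would cut out a proper sublocus that cannot absorb the whole $t$-line of our family --- or alternatively by exhibiting a single $(u_0,v_0)$ where $\mathrm{End}(J_{u_0,v_0,\bar{\Q}})\otimes\Q=\Q(\zeta)$ can be verified directly via Honda--Tate at a prime of good ordinary reduction, invoking semicontinuity of endomorphism algebras. With the generic algebra pinned down to $\Q(\zeta)$, Masser's theorem --- which asserts that in a non-isotrivial family of abelian varieties over a number field the set of rational specializations with strictly larger endomorphism algebra is thin in Serre's sense --- implies that $100\%$ of pairs $(u,v)\in\Z^2$ with $3\nmid uv$, ordered by $\max(|u|,|v|)$, satisfy $\mathrm{End}(J_{u,v,\bar{\Q}})\otimes\Q=\Q(\zeta)$. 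Such $(u,v)$ therefore exist, and Proposition~\ref{prop:arbmany} completes the construction.

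The main obstacle I foresee is identifying the generic endomorphism algebra. While the inclusion $\Q(\zeta)\subset\mathrm{End}(J_{u,v,\bar{\Q}})\otimes\Q$ is automatic, ruling out a uniform enlargement (such as a quaternionic or ``fake-CM'' doubling) across the entire family requires either a careful Shimura moduli analysis or an ad hoc production of a favorable base point. I expect the ordinary-reduction approach to be the most direct, reducing the matter to a computation of Frobenius eigenvalues on a single well-chosen fiber; Masser's theorem then does the rest.
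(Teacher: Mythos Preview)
Your overall architecture matches the paper's: reduce from $B_{u,v,k}$ to $J_{u,v}$ via isogeny and twist, then invoke Masser's specialization theorem to pass from the generic fiber to a density-one set of rational fibers, and finally observe that the condition $3\nmid uv$ is a positive-proportion constraint. The one substantive divergence is in how the key input to Masser is obtained.

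You aim to pin down the generic geometric endomorphism algebra as \emph{exactly} $\Q(\zeta)$, and you propose either a Shimura-variety dimension count or a Honda--Tate computation at an ordinary fiber. Both are plausible but, as you note yourself, neither is carried out; the Honda--Tate route in particular would require producing a specific prime and verifying the Frobenius characteristic polynomial is irreducible with the correct Newton polygon, which is a genuine (if routine) calculation depending on~$p$. The paper bypasses this entirely: it does not compute the generic endomorphism algebra at all, but instead shows directly that the generic Jacobian is \emph{absolutely simple} by specializing at the complex point $t=\tfrac12+\tfrac{\sqrt3}{2}i$, where $y^p=x(x-1)(x-t)$ becomes isomorphic to $y^p=x^3-1$, whose Jacobian is known to be absolutely simple by results of Hazama and J\k{e}drzejak. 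Simplicity of one fiber forces simplicity of the generic fiber (a decomposition would specialize), so the generic endomorphism algebra is a division ring---and that is all Masser needs. This is both weaker than what you aim for and much cheaper to prove: a single citation replaces the step you flag as the main obstacle.
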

\begin{proof}
The Jacobian of  the curve $y^p = x(x-1)(x-t)$ over~$\Q(t)$
is absolutely simple since there is a value
of~$t\in \mathbb{C}$, namely $t = \frac{1}{2} + \frac{\sqrt{3}}{2} i$,
which makes the curve isomorphic over~$\mathbb{C}$
to $y^p = x^3 - 1$, and the Jacobian of the latter is known to be absolutely
simple~\cite{hazama, jedrzejak}.
By a result of Masser~\cite{masser}, for an abelian variety over~$\Q(t)$, 
the geometric endomorphism ring
for 100\% of specialisations of~$t \in \Q$ (ordered by height) is the 
same as the generic geometric
endomorphism ring. Since the generic abelian variety is geometrically 
simple, this endomorphism ring is a division ring, and hence $100\%$ 
of specializations are simple as well.  But for $t = a/b \in \Q$, a 
positive proportion have~$3$ exactly dividing~$a$ and~$3$ not dividing~$b$.
So, there are many curves $y^p = x(x-1)(x-3 v/u)$ with
$u,v \in \Z$, not divisible by~$3$, with absolutely simple Jacobian.
This is a twist of the curve
$\C_{u,v,1} : y^p = x(x-3u)(x-3^2v)$,
so there are curves of this form with absolutely simple Jacobian,
giving that $B_{u,v,1}$ is also absolutely simple. Since each
$B_{u,v,k}$ is a twist of $B_{u,v,1}$, it follows that $B_{u,v,k}$
is absolutely simple for all~$k$.
\end{proof}
\begin{proof}[Proof of Theorem~$\ref{thm:mainthm}$]
This follows from Corollary~\ref{cor:ppart}, Proposition~\ref{prop:arbmany}, 
and Lemma~\ref{lem:existsimple}.
\end{proof}

\newpage
\section*{Appendix: The Cassels-Tate pairing for $p$-coverings of Jacobians}
\smallskip
\begin{center}by \textsc{Tom Fisher}\end{center}
\medskip

The purpose of this appendix is to interpret the proof of 
Proposition~\ref{prop:diff} in terms of a certain Cassels-Tate pairing. 

Let $J/\Q$ be a Jacobian, and identify $J = \widehat{J}$ in the
usual way. Let $p > 5$ be a prime. Suppose that $J(\Q)$ contains
subgroups $\Z/p\Z$ and $(\Z/p\Z)^2$ that we take to be the kernels of
isogenies $\widehat{\psi} : J \to \widehat{B}$ and
$\widehat{\phi} : J \to \widehat{A}$. We further suppose that
$\ker \widehat{\psi} \subset \ker \widehat{\phi}$, so that
$\widehat{\phi}$ factors via $\widehat{\psi}$ to give a commutative diagram
\[ \xymatrix{ J \ar[rr]^{\widehat{\phi}} \ar[dr]_{\widehat{\psi}}
    && \widehat{A} \\
 & \widehat{B} \ar[ur]_{\widehat{\nu}} }
\]

There is then a commutative diagram
\[ \xymatrix{ A(\Q) \ar[r]^\nu \ar@{=}[d] & B(\Q) \ar[r]^-{\delta_\nu}
    \ar[d]^\psi & \Q^*/(\Q^*)^p \ar[d]^\iota \\
 A(\Q) \ar[r]^\phi \ar[d]^\nu & J(\Q) \ar[r]^-{\delta_\phi} \ar@{=}[d]
    & \Q^*/(\Q^*)^p \times
    \Q^*/(\Q^*)^p \ar[d]^\pi \\ B(\Q) \ar[r]^\psi & J(\Q) \ar[r]^-{\delta_\psi}
    & \Q^*/(\Q^*)^p } \]
where $\iota(s) = (s,s^{-1})$ and $\pi(r_1,r_2) = r_1 r_2$.

We now give the Weil pairing definition of the
Cassels-Tate pairing (see \cite[Chapter 1, Proposition 6.9]{M},
\cite[Section 12.2]{PS-CTP} or~\cite{ctpps}) simplified by the fact 
that $\pi$ has an obvious section given by $r \mapsto (r,1)$. 

The Cassels-Tate pairing
\[ \langle~,~\rangle_\CT : S^{(\psi)}(B/\Q) \times S^{(\widehat{\nu})}(\widehat{
B}/\Q) \to \Q/\Z \]
is defined as follows. We start with 
\[ r \in S^{(\psi)}(B/\Q) \subset \Q^*/(\Q^*)^p \quad \text{ and }
\quad s \in S^{(\widehat{\nu})} (\widehat{B}/\Q) \subset H^1(\Q,\Z/p\Z). \]
For each prime $\ell$ we pick $P_\ell \in J(\Q_\ell)$ with
$\delta_{\psi,\ell}(P_\ell) \equiv r \mod{(\Q_\ell^*)^p}$.
Then $\delta_{\phi,\ell}(P_\ell) = (r \xi_\ell,\xi_\ell^{-1})$ for some
$\xi_\ell \in \Q_\ell^*/(\Q_\ell^*)^p$. We define
\[  \langle r, s \rangle_\CT = \sum_{\ell} (\xi_\ell,\res_\ell s)_\ell, \]
where
\begin{equation}
 \label{locpair}
 (~,~)_\ell : H^1(\Q_\ell,\mu_p) \times H^1(\Q_\ell,\Z/p\Z) \to \Q/\Z
 \end{equation}
is the local pairing given by cup product and the local invariant map. Since the sum of the local invariants of an element in $H^2(\Q, \mu_p)$ is 0,  we have $\langle r,s\rangle_\CT= 0$ for all $r \in \delta_\psi(J(\Q))$.

{\bf Proposition A.}  \label{prop:ctp}
  With the notation and assumptions of Proposition~\ref{prop:diff}
  (noting that $\delta_\phi$, $\delta_\psi$ and $S^{(\psi)}(B/\Q)$
  are there called $\partial^H$, $\partial^D$ and
  ${\operatorname{Sel}}(B_k)$) we have
  \begin{itemize}
  \item[(i)] $p_i \in S^{(\psi)}(B/\Q) \subset \Q^*/(\Q^*)^p$ for all
    $1 \leqslant i \leqslant t$.
  \item[(ii)] Let $\chi_i \in H^1(\Q,\Z/p\Z) = \Hom(\GalTom(\Qbar/\Q),\Z/p\Z)$
    be the unique continuous character that
    factors via $\GalTom(\Q(\zeta_{p_i})/\Q)$ and satisfies
    $\chi_i(\operatorname{Frob}_3) = 1$ (this is possible
    by assumption (4)). Then \[ \chi_i - \chi_j \in
    S^{(\widehat{\nu})}(\widehat{B}/\Q) \subset H^1(\Q,\Z/p\Z) \]
    for all $1 \leqslant i,j, \leqslant t$.
  \item[(iii)] For $a_1,\ldots,a_t,b_1, \ldots, b_t \in \{0,1, \ldots,p-1\}$
    with $\sum b_j \equiv 0 \pmod{p}$ we have
    \[  \left\langle \prod_{i=1}^t p_i^{a_i} , \sum_{j=1}^t b_j \chi_j
        \right\rangle_\CT
      = \frac{-5}{p} \sum_{i=1}^t a_i b_i. \]
    In particular, since $p > 5$, if $q = \prod_{i=1}^t p_i^{a_i}$, and
    the $a_i \in \{0,1, \ldots,p-1\}$ are not all equal,
    then $q \not\in \delta_\psi(J(\Q))$.
  \end{itemize}
  \begin{proof}
 (i) See the final paragraph of the proof of Proposition~\ref{prop:diff}. \\
 (ii) The restriction $\res_\ell(\chi_i)$ is unramified for all
 $\ell \not= p_i$, and trivial for all $\ell \in U \setminus \{3\}$
 by assumption (3).
 So we only need to check the local conditions at $p_1, \ldots, p_t$ and~$3$.
 By Lemmas~\ref{lem:torsionevaluation} and~\ref{lem:orderJQpip}
 we know that $\im \delta_{\phi,p_i}$
 has order $p^2$, and the natural map $\im \delta_{\phi,p_i} \to
 \im \delta_{\psi,p_i}$ is an isomorphism. Chasing around (the local
 analogue of) the above diagram shows that $\im \delta_{\nu,p_i}$
 is trivial. It follows by Tate local duality that
 $\im \delta_{\widehat{\nu},p_i}$ is all of $H^1(\Q_{p_i},\Z/p\Z)$.
 In other words, in the definition of $S^{(\widehat{\nu})} (\widehat{B}/\Q)$,
 there are no local conditions at $p_1, \ldots, p_t$. It is
 the local condition at $3$ that forces us to take differences. \\
 (iii) We take $r = p_i$ in the above description of the pairing.
 The only primes $\ell$ that can contribute to the pairing are
 those in $\{p_1, \ldots,p_t\} \cup U$. For all such primes
 $\ell \not= p_i$ we have that $r$ is trivial in
 $\Q_{\ell}^\times/(\Q_{\ell}^\times)^p$ by assumptions (1) and (2).
 Taking $P_\ell = 0$ and $\xi_\ell = 1$ we see that 
these primes make no contribution to the pairing.

 It remains to compute the contribution at $\ell = p_i$. By Lemma \ref{lem:torsionevaluation}, we have
 \[ \delta_{\phi,\ell}(P_\ell) = ((3^{-3} p_i^{-2})^a(3p_i)^b,(3p_i)^a
   (3^{-2}p_i^{-2})^b). \]
 for some $a,b \in \Z/p\Z$. We know that $(r_1,r_2) \mapsto r_1 r_2$
 maps this to $r = p_i \in \Q_{p_i}^*/(\Q_{p_i}^*)^p$, and that
 $\Q_{p_i}^*/(\Q_{p_i}^*)^p$ is generated by $3$ and $p_i$.
 Therefore $-2a-b \equiv 0 \pmod{p}$ and $-a-b \equiv 1 \pmod{p}$.
 We solve these to give $a = 1$ and $b=-2$. Therefore 
 \[ \delta_{\phi,\ell}(P_\ell) = ( 3^{-5} p_i^{-4}, 3^{5}p_i^{5} ). \]
 and $\xi_\ell = 3^{-5} p_i^{-5}$.
 Therefore
 \[ \langle p_i, \sum b_j \chi_j \rangle_\CT =
   \sum b_j ((3p_i)^{-5}, \res_{p_i} \chi_j)_{p_i} = \frac{-5 b_i}{p}. \]
  Notice that, since $p_i \equiv 1 \pmod{p}$, evaluating the 
  local pairing~\eqref{locpair} reduces to a computation of 
  Hilbert symbols. 
The formula in the statement of the proposition follows
by linearity in the first argument.
\end{proof}


\end{document}